 \newtheorem{remark}{Remark}
 \newtheorem{lemma}[remark]{Lemma}
 \newtheorem{theorem}[remark]{Theorem}
 \newtheorem{corollary}[remark]{Corollary}
\title{On the strong metric generators of strong product graphs}
\date{}
\author{Dorota Kuziak$^{(1)}$, Ismael G. Yero$^{(2)}$ and Juan A. Rodr\'{\i}guez-Vel\'{a}zquez$^{(1)}$ \\
$^{(1)}${\small Departament d'Enginyeria Inform\`atica i Matem\`atiques,}\\
{\small Universitat Rovira i Virgili,} {\small Av. Pa\"{\i}sos Catalans 26, 43007 Tarragona, Spain.}\\
{\small dorota.kuziak\@@urv.cat, juanalberto.rodriguez\@@urv.cat}\\
$^{(2)}${\small Departamento de Matem\'aticas, Escuela Polit\'ecnica Superior de Algeciras}\\
{\small Universidad de C\'adiz,} {\small Av. Ram\'on Puyol s/n, 11202 Algeciras, Spain.}\\
{\small ismael.gonzalez\@@uca.es}\\
}
\begin{document}
\maketitle

\begin{abstract}
Let $G$ be a connected graph. A vertex $w\in V(G)$ strongly resolves two vertices $u,v\in V(G)$ if there exists some shortest $u-w$ path containing $v$ or some shortest $v-w$ path containing $u$. A set $S$ of vertices is a strong metric generator for $G$ if every pair of vertices of $G$ is strongly resolved by some vertex of $S$. The smallest cardinality of a strong metric generator for $G$ is called the strong metric dimension of $G$. It is well known that the problem of computing this invariant is NP-hard. In this paper we study the problem of finding exact values or sharp bounds for the strong metric dimension of strong product graphs and express
these in terms of invariants of the factor graphs.
\end{abstract}

{\it Keywords:} Strong metric dimension; strong metric basis; strong metric generator; strong product graphs.

{\it AMS Subject Classification Numbers:}  05C12; 05C69; 05C76.

\section{Introduction}

A {\em generator} of a metric space is a set $S$ of points in the space with the property that every point of the space is uniquely determined by its distances from the elements of $S$. Given a simple and connected graph $G=(V,E)$, we consider the metric $d_G:V\times V\rightarrow \mathbb{R}^+$, where $d_G(x,y)$ is the length of a shortest path between $x$ and $y$. $(V,d_G)$ is clearly a metric space. A vertex $v\in V$ is said to distinguish two vertices $x$ and $y$ if $d_G(v,x)\ne d_G(v,y)$. A set $S\subset V$ is said to be a \emph{metric generator} for $G$ if any pair of vertices of $G$ is
distinguished by some element of $S$. A minimum generator is called a \emph{metric basis}, and its cardinality the \emph{metric dimension} of $G$. Motivated by the problem of uniquely determining the location of an intruder in a network, the concept of metric dimension of a graph was introduced by Slater in \cite{leaves-trees}, where the metric generators were called \emph{locating sets}. The concept of metric dimension of a graph was introduced independently by Harary and Melter in \cite{harary}, where metric generators were called \emph{resolving sets}.

In \cite{Sebo} a more restricted invariant than the metric dimension is introduced. A vertex $w\in V(G)$ \emph{strongly resolves} two vertices $u,v\in V(G)$ if $d_G(w,u)=d_G(w,v)+d_G(v,u)$ or $d_G(w,v)=d_G(w,u)+d_G(u,v)$, \emph{i.e.}, there exists some shortest $w-u$ path containing $v$ or some shortest $w-v$ path containing $u$. A set $S$ of vertices in a connected graph $G$ is a \emph{strong metric generator} for $G$ if every two vertices of $G$ are strongly resolved by some vertex of $S$. The smallest cardinality of a strong metric generator of $G$ is called the \emph{strong metric dimension} and is denoted by $dim_s(G)$. A \emph{strong metric basis} of $G$ is a strong metric generator for $G$ of cardinality $dim_s(G)$.

Given a simple graph $G=(V,E)$, we denote two adjacent vertices $u,v$ by $u\sim v$ and, in this case, we say that $uv$ is an edge of $G$, \emph{i.e.}, $uv\in E$. For a vertex $v\in V,$ the set $N(v)=\{u\in V: \; u\sim v\}$ is the open neighborhood of $v$ and the set $N[v] = N(v)\cup \{v\}$ is the closed neighborhood of $v$. The diameter of $G$ is defined as $D(G)=\max_{u,v\in V}\{d(u,v)\}$. The vertex  $x\in V$ is diametral in $G$ if there exists $y\in V$ such that $d_G(x,y)=D(G)$. We say that $G$ is $2$-antipodal if for each vertex $x\in V$ there exists exactly one vertex $y\in V$ such that $d_G(x,y)=D(G)$.

A set $S$ of vertices of $G$ is a \emph{vertex cover} of $G$ if every edge of $G$ is incident with at least one vertex of $S$. The \emph{vertex cover number} of $G$, denoted by $\alpha(G)$, is the smallest cardinality of a vertex cover of $G$. We refer to an $\alpha(G)$-set in a graph $G$ as a vertex cover set of cardinality $\alpha(G)$. A vertex $u$ of $G$ is \emph{maximally distant} from $v$ if for every $w\in N_G(u)$, $d_G(v,w)\le d_G(u,v)$. If $u$ is maximally distant from $v$ and $v$ is maximally distant from $u$, then we say that $u$ and $v$ are \emph{mutually maximally distant}. The {\em boundary} of $G=(V,E)$ is defined as $$\partial(G) = \{u\in V:\; \mbox{exists } v\in V\, \mbox{ such that } u,v\mbox{ are mutually maximally distant}\}.$$
In \cite{Oellermann} was first presented a relationship between the boundary of a graph and its strong metric generators. Also, in \cite{Oellermann} the authors defined the concept of \emph{strong resolving graph} $G_{SR}$ of a graph $G$ like a graph with vertex set $V(G_{SR}) = V$ and two vertices $u,v$ are adjacent in $G_{SR}$ if and only if $u$ and $v$ are mutually maximally distant in $G$.

We recall that the \textit{Cartesian product} of two graphs $G=(V_1,E_1)$ and $H=(V_2,E_2)$ is the graph $G\Box H=(V,E)$, such that $V=V_1\times V_2$ and two vertices $(a,b)\in V$ and $(c,d)\in V$ are adjacent in $G\Box H$ if and only if  either
\begin{itemize}
\item $a=c$ and $bd\in E_2$, or
\item $ac\in E_1$  and $b=d$.
\end{itemize}

The \textit{strong product} of two graphs $G=(V_1,E_1)$ and $H=(V_2,E_2)$ is the graph $G\boxtimes H=(V,E)$, such that
$V=V_1\times V_2$ and two vertices $(a,b)\in V$ and $(c,d)\in V$ are adjacent in $G\boxtimes H$ if and only if either
\begin{itemize}
\item $a=c$ and $bd\in E_2$, or
\item $ac\in E_1$ and $b=d$, or
\item $ac\in E_1$ and $bd\in E_2$.
\end{itemize}

The \textit{lexicographic product} of two graphs $G=(V_1,E_1)$ and $H=(V_2,E_2)$ is the graph $G\circ H$ with the vertex set $V=V_1\times V_2$ and two vertices $(a,b)\in V$ and $(c,d)$ are adjacent if either
\begin{itemize}
\item $ac\in E_1$, or
\item $a=c$ and $bd\in E_2$.
\end{itemize}

The \textit{Cartesian sum} of two graphs $G=(V_1,E_1)$ and $H=(V_2,E_2)$, denoted by $G\oplus H$, has as the vertex set $V=V_1\times V_2$ and two vertices $(a,b)\in V$ and $(c,d)\in V$ are adjacent in $G\oplus H$ if and only if $ac\in E_1$ or $bd\in E_2$. This notion of graph product was introduced by Ore \cite{Ore}. The Cartesian sum is also known as the disjunctive product \cite{disjunctive-product}.

Let $G=(V,E)$ and $G'=(V',E')$ be two graphs. If $V'\subseteq V$ and $E'\subseteq E$, then $G'$ is a subgraph of $G$ and we denote that by $G'\sqsubseteq G$.

In this article we are interested in the study of strong metric generators of strong product graphs. It was shown in \cite{Oellermann} that the problem of computing $dim_s(G)$ is NP-hard. This suggests obtaining exact values of the strong metric dimension for special classes of strong product graphs or finding sharp bounds on this invariant.

\section{Results}

Oellermann and Peters-Fransen \cite{Oellermann} showed that the problem of finding the strong metric dimension of a graph $G$ can be transformed into the problem of computing the vertex cover number of $G_{SR}$.

\begin{theorem}{\em \cite{Oellermann}}\label{th oellermann}
For any connected graph $G$,
$$dim_s(G) = \alpha(G_{SR}).$$
\end{theorem}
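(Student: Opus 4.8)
The plan is to prove the two inequalities $dim_s(G)\le\alpha(G_{SR})$ and $dim_s(G)\ge\alpha(G_{SR})$ separately. Both directions rest on the following basic interaction between strong resolution and mutual maximal distance: a vertex $w$ that strongly resolves two \emph{mutually maximally distant} vertices $u,v$ must be one of $u,v$. I would establish this first. Suppose $w\notin\{u,v\}$ strongly resolves $u,v$; by symmetry assume $d_G(w,u)=d_G(w,v)+d_G(v,u)$. Then there is a shortest $w$--$u$ path containing $v$ as an internal vertex (it is internal since $w\neq v$ and $u\neq v$), and the neighbour $v^-$ of $v$ lying on the $w$-side of that path satisfies $d_G(v^-,u)=d_G(v,u)+1$, contradicting the fact that $v$ is maximally distant from $u$.

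For $dim_s(G)\ge\alpha(G_{SR})$: let $W$ be a strong metric basis of $G$. For every edge $uv$ of $G_{SR}$ the vertices $u,v$ are mutually maximally distant, so the vertex of $W$ that strongly resolves $u$ and $v$ must, by the observation above, belong to $\{u,v\}$. Hence $W$ is a vertex cover of $G_{SR}$ and $dim_s(G)=|W|\ge\alpha(G_{SR})$.

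For $dim_s(G)\le\alpha(G_{SR})$: I would show that every vertex cover $S$ of $G_{SR}$ is a strong metric generator of $G$. Fix distinct $x,y\in V(G)$. The crux is a "sandwich lemma": there exist mutually maximally distant vertices $u,v$ with $d_G(u,v)=d_G(u,x)+d_G(x,y)+d_G(y,v)$, i.e.\ $x$ and $y$ both lie, in this order, on some shortest $u$--$v$ path. Granting this, $d_G(u,x)=d_G(u,y)-d_G(x,y)$ shows $u$ strongly resolves $x,y$, and likewise $v$ strongly resolves $x,y$; since $uv\in E(G_{SR})$ and $S$ is a vertex cover, at least one of $u,v$ lies in $S$, so $S$ strongly resolves $x,y$. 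As $x,y$ were arbitrary, $S$ is a strong metric generator and $dim_s(G)\le|S|=\alpha(G_{SR})$.

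The sandwich lemma is where I expect the real work to be, and I would prove it by a two-stage extremal choice. First pick $u$ among all $z$ with $d_G(z,y)=d_G(z,x)+d_G(x,y)$ (this family is nonempty, as $z=x$ qualifies) so that $d_G(u,y)$ is maximum; a short triangle-inequality argument on a neighbour of $u$ shows $u$ is maximally distant from $y$. Then pick $v$ among all $z$ with $d_G(u,z)=d_G(u,x)+d_G(x,y)+d_G(y,z)$ (nonempty, as $z=y$ qualifies) so that $d_G(u,v)$ is maximum. One then checks, via triangle inequalities together with the two maximality conditions, that $v$ is maximally distant from $u$ \emph{and} that $u$ is maximally distant from $v$; the latter is the delicate case, since one must show that a hypothetical neighbour $u'$ of $u$ with $d_G(u',v)=d_G(u,v)+1$ still satisfies $d_G(u',y)=d_G(u',x)+d_G(x,y)$ while having strictly larger distance to $y$, contradicting the choice of $u$. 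The main obstacle is keeping the two layers of maximization consistent and making sure every distance chain closes up with no off-by-one error; once that is done, concatenating shortest $u$--$x$, $x$--$y$, and $y$--$v$ paths yields the required common shortest $u$--$v$ path and completes the lemma, hence the theorem.
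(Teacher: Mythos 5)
The paper does not prove this theorem; it is quoted from Oellermann and Peters-Fransen, so there is no in-paper argument to compare against. Your proof is correct and is essentially the standard one from that reference: the observation that only $u$ or $v$ itself can strongly resolve a mutually maximally distant pair gives $dim_s(G)\ge\alpha(G_{SR})$, and your ``sandwich lemma'' (every pair $x,y$ lies on a shortest path between some mutually maximally distant pair $u,v$, obtained by the two-stage extremal choice) gives the converse; the triangle-inequality bookkeeping in both extremal steps, including the delicate verification that a neighbour $u'$ of $u$ farther from $v$ would remain in the first family while increasing $d_G(u',y)$, closes up with no off-by-one errors.
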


Recall that the largest cardinality of a set of vertices of $G$, no two of which are adjacent, is called the \emph{independence number} of $G$ and is denoted by $\beta(G)$. We refer to an $\beta(G)$-set in a graph $G$ as an independent set of cardinality $\beta(G)$. The following well-known result, due to Gallai, states the relationship between the independence number and the vertex cover number of a graph.

\begin{theorem}{\rm (Gallai's theorem)}\label{th gallai}
For any graph  $G$ of order $n$,
$$\alpha(G)+\beta(G) = n.$$
\end{theorem}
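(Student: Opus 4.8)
The plan is to exploit the elementary duality between vertex covers and independent sets under complementation. First I would establish the key claim: a set $S\subseteq V(G)$ is a vertex cover of $G$ if and only if $V(G)\setminus S$ is an independent set of $G$. This is immediate from the definitions. If $S$ is a vertex cover, then every edge of $G$ has an endpoint in $S$, so no edge has both endpoints in $V(G)\setminus S$; hence $V(G)\setminus S$ induces no edge, i.e.\ it is independent. Conversely, if $V(G)\setminus S$ is independent, then no edge has both endpoints outside $S$, so every edge has at least one endpoint in $S$, i.e.\ $S$ is a vertex cover.

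Next I would use this equivalence to pass between the two extremal quantities. Let $S$ be an $\alpha(G)$-set, that is, a minimum vertex cover, so $|S|=\alpha(G)$. By the claim, $V(G)\setminus S$ is an independent set, whence $\beta(G)\ge |V(G)\setminus S| = n-\alpha(G)$, giving $\alpha(G)+\beta(G)\ge n$. For the reverse inequality, let $I$ be a $\beta(G)$-set, i.e.\ a maximum independent set, so $|I|=\beta(G)$. By the claim again, $V(G)\setminus I$ is a vertex cover, whence $\alpha(G)\le |V(G)\setminus I| = n-\beta(G)$, giving $\alpha(G)+\beta(G)\le n$. Combining the two inequalities yields $\alpha(G)+\beta(G)=n$.

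There is essentially no hard step here; the statement is classical and the only point worth stating carefully is that complementation is an inclusion-reversing bijection on the subsets of $V(G)$, so that a cardinality-minimizing vertex cover corresponds to a cardinality-maximizing independent set and vice versa, which is precisely what the two displayed inequalities record. Equivalently, one could compress the whole argument to $\alpha(G)=\min\{|S| : S \text{ a vertex cover of } G\} = \min\{\, n-|I| : I \text{ independent in } G\,\} = n-\max\{|I| : I \text{ independent in } G\} = n-\beta(G)$.
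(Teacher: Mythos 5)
Your proof is correct and complete: the complementation duality (a set is a vertex cover if and only if its complement is independent) together with the two extremal inequalities is the standard and essentially unique argument for Gallai's identity. The paper states this theorem as a known classical result without proof, so there is nothing to compare against; your write-up would serve as a valid self-contained justification.
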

Thus, for any graphs $G$ and $H$ of order $n_1$ and $n_2$, respectively, by using Theorems \ref{th oellermann} and \ref{th gallai}, we immediately  obtain that

\begin{equation}
dim_s(G\boxtimes H) = n_1\cdot n_2 - \beta((G\boxtimes H)_{SR}) \label{oellermann-gallai}
\end{equation}

The following basic remark leads to a corollary about the neighborhood of a vertex in the strong product graph $G\boxtimes H$, which will be useful to present our results.

\begin{remark}\label{rem neighbor}
Let $G$ and $H$ be two graphs. For every $u\in V(G)$ and $v\in V(H)$ $$N_{G\boxtimes H}[(u,v)] = N_G[u]\times N_H[v].$$
\end{remark}

\begin{corollary}\label{cor neighbor}
Let $G$ and $H$ be two graphs and let $u,u'\in V(G)$ and $v,v'\in V(H)$.
The following assertion hold.
\begin{enumerate}[{\rm (i)}]
\item If $(u',v')\in N_{G\boxtimes H}(u,v)$, then $u'\in N_{G}[u]$ and $v'\in N_{G}[v]$.
\item If $u'\in N_{G}(u)$ and $v'\in N_{G}(v)$, then $(u',v')\in N_{G\boxtimes H}(u,v)$.
\end{enumerate}-
\end{corollary}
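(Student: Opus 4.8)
The plan is to obtain both items as immediate consequences of Remark \ref{rem neighbor}, which already pins down the full closed neighborhood $N_{G\boxtimes H}[(u,v)] = N_G[u]\times N_H[v]$. The only thing to keep track of carefully is the bookkeeping between open and closed neighborhoods: item (i) can only conclude membership in the \emph{closed} neighborhoods of the factors (so the intended statement reads $u'\in N_G[u]$ and $v'\in N_H[v]$), whereas item (ii) needs the \emph{open}-neighborhood hypotheses precisely to certify that the resulting vertex is different from $(u,v)$.

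For item (i), I would argue as follows. Since $N_{G\boxtimes H}(u,v)\subseteq N_{G\boxtimes H}[(u,v)]$, any $(u',v')\in N_{G\boxtimes H}(u,v)$ satisfies $(u',v')\in N_{G\boxtimes H}[(u,v)] = N_G[u]\times N_H[v]$ by Remark \ref{rem neighbor}. By the definition of the Cartesian product of sets, this is exactly the conjunction $u'\in N_G[u]$ and $v'\in N_H[v]$, which is the claim.

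For item (ii), the cleanest route is to go back to the definition of adjacency in $G\boxtimes H$. The hypothesis $u'\in N_G(u)$ means $uu'\in E(G)$, and $v'\in N_H(v)$ means $vv'\in E(H)$; hence the third clause in the definition of the strong product applies, so $(u,v)(u',v')\in E(G\boxtimes H)$. Moreover $u'\neq u$ forces $(u',v')\neq (u,v)$, so $(u',v')$ lies in the \emph{open} neighborhood $N_{G\boxtimes H}(u,v)$. (Alternatively, one could invoke Remark \ref{rem neighbor} together with $N_G(u)\subseteq N_G[u]$ and $N_H(v)\subseteq N_H[v]$ to place $(u',v')$ in the closed neighborhood, and then separately note $(u',v')\neq(u,v)$; either way the argument is one line.)

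There is essentially no obstacle here: all the content sits in Remark \ref{rem neighbor}, and the corollary is a routine unpacking of it. The single point worth stating explicitly in the write-up is the open/closed distinction flagged above, since that is what makes the two directions slightly asymmetric.
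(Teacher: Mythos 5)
Your proposal is correct and takes the same route the paper intends: the paper states this corollary without proof, as an immediate consequence of Remark \ref{rem neighbor} and the definition of adjacency in $G\boxtimes H$, which is exactly your argument. Your explicit handling of the open/closed neighborhood asymmetry (and the implicit correction of the statement's typo $N_G[v]$, $N_G(v)$ to $N_H[v]$, $N_H(v)$) is the right reading of what the authors meant.
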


The following result about the boundary of strong product graphs was presented in \cite{boundary-in-strong}. Nevertheless in such a paper the authors are more interested into the cardinality of the boundary $\partial(G\boxtimes H)$ than into how the subgraph induced by boundary looks like.

\begin{theorem}{\em \cite{boundary-in-strong}}\label{th boundary}
For any graphs $G$ and $H$, $\partial(G\boxtimes H) = (\partial(G)\times V(H))\cup (V(G)\times \partial(H))$.
\end{theorem}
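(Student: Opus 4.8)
The plan is to reduce the statement to the behaviour of the \emph{maximally distant} relation in the two factors, using two standard ingredients: the distance formula $d_{G\boxtimes H}((a,b),(c,d))=\max\{d_G(a,c),d_H(b,d)\}$ for the strong product, and the description of neighbourhoods in Remark~\ref{rem neighbor}, which says that every neighbour of $(u,v)$ in $G\boxtimes H$ has the form $(a,b)$ with $a\in N_G[u]$ and $b\in N_H[v]$.

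For the inclusion ``$\supseteq$'', I would take $u\in\partial(G)$ together with a vertex $u'$ that is mutually maximally distant from $u$ in $G$, fix an arbitrary $v\in V(H)$, and prove that the pair $(u,v),(u',v)$ is mutually maximally distant in $G\boxtimes H$; the case $v\in\partial(H)$ is symmetric. Writing $p=d_G(u',u)\ge 1$, we have $d_{G\boxtimes H}((u',v),(u,v))=p$, and every neighbour $(a,b)$ of $(u,v)$ satisfies $d_G(u',a)\le p$ (because $u$ is maximally distant from $u'$, and trivially when $a=u$) and $d_H(v,b)\le 1\le p$; hence $d_{G\boxtimes H}((u',v),(a,b))=\max\{d_G(u',a),d_H(v,b)\}\le p$, so $(u,v)$ is maximally distant from $(u',v)$. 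Since $u$ and $u'$ play symmetric roles, $(u',v)$ is maximally distant from $(u,v)$ as well.

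For ``$\subseteq$'', suppose $(u,v),(u',v')$ are mutually maximally distant in $G\boxtimes H$, put $p=d_G(u,u')$ and $q=d_H(v,v')$, and assume without loss of generality $p=\max\{p,q\}$, so the distance between the two vertices equals $p$ and $p\ge 1$. The claim is that then $u$ and $u'$ are mutually maximally distant in $G$, which gives $u\in\partial(G)$ and therefore $(u,v)\in\partial(G)\times V(H)$. Indeed, if some $a\in N_G(u)$ had $d_G(u',a)=p+1$, then $(a,v)\in N_{G\boxtimes H}(u,v)$ would lie at distance $\max\{p+1,q\}=p+1>p$ from $(u',v')$, contradicting that $(u,v)$ is maximally distant from $(u',v')$; applying the same argument to a neighbour $(a',v')$ of $(u',v')$ shows that $u'$ is maximally distant from $u$.

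I expect no deep obstacle: the two conceptual points are (i) that in ``$\supseteq$'' one may keep the second coordinate fixed while lifting a witness from a factor, and (ii) that in ``$\subseteq$'' the projection onto whichever factor realizes the distance already carries a mutually-maximally-distant pair, both of which drop out of the distance formula together with Remark~\ref{rem neighbor}. The only point needing care is the handling of degenerate situations (isolated vertices, or a trivial factor $K_1$), where one should fix the convention that ``mutually maximally distant'' refers to a pair of \emph{distinct} vertices, which is what guarantees $p\ge 1$ above; for $G$ and $H$ connected and nontrivial this subtlety disappears.
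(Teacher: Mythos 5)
Your argument is correct. Note that the paper itself gives no proof of this statement --- it is quoted from the reference \cite{boundary-in-strong} --- but it does prove the finer Lemma~\ref{lem boundary}, which characterizes exactly which pairs are mutually maximally distant in $G\boxtimes H$, using precisely the two ingredients you identify (the distance formula $d_{G\boxtimes H}=\max\{d_G,d_H\}$ and the neighborhood description of Remark~\ref{rem neighbor}/Corollary~\ref{cor neighbor}). Your ``$\supseteq$'' direction is the sufficiency of conditions (ii) and (iii) of that lemma, and your ``$\subseteq$'' direction is the opening step of its necessity argument (deducing from inequalities such as (\ref{i1}) and (\ref{i3}) that the coordinate realizing the maximum carries a mutually maximally distant pair), so in substance you are following the same route, just stopping at the coarser conclusion needed for the boundary identity. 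Your closing caveat is also the right one: the clean statement really requires $G$ and $H$ connected and nontrivial (as the paper assumes in Lemma~\ref{lem boundary}), since otherwise distances may be undefined and one must fix a convention excluding a vertex being mutually maximally distant from itself in order to guarantee $p\ge 1$.
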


In the next lemma we pretend to describe the structure of the strong resolving graph of $G\boxtimes H$.

\begin{lemma}\label{lem boundary}
Let $G$ and $H$ be two connected nontrivial graphs. Let $u,x$ be two vertices of $G$ and let $v,y$ be two vertices of $H$. Then $(u,v)$ and $(x,y)$ are mutually maximally distant vertices in $G\boxtimes H$ if and only if one of the following conditions holds:
\begin{enumerate}[{\rm (i)}]
\item $u,x$ are mutually maximally distant in $G$ and $v,y$ are mutually maximally distant in $H$;
\item $u,x$ are mutually maximally distant in $G$ and $v=y$;
\item $v,y$ are mutually maximally distant in $H$ and $u=x$;
\item $u,x$ are mutually maximally distant in $G$ and $d_G(u,x) > d_H(v,y)$;
\item $v,y$ are mutually maximally distant in $H$ and $d_G(u,x) < d_H(v,y)$.
\end{enumerate}
\end{lemma}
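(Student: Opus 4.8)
The plan is to rely on two facts about the strong product: the distance formula $d_{G\boxtimes H}\big((a,b),(c,d)\big)=\max\{d_G(a,c),d_H(b,d)\}$, and the neighbourhood identity $N_{G\boxtimes H}(u,v)=\big(N_G[u]\times N_H[v]\big)\setminus\{(u,v)\}$, which is immediate from Remark~\ref{rem neighbor} (see also Corollary~\ref{cor neighbor}). Throughout put $a=d_G(u,x)$ and $b=d_H(v,y)$, so that $D:=d_{G\boxtimes H}\big((u,v),(x,y)\big)=\max\{a,b\}$. The proof has two stages: first a description of when one vertex of $G\boxtimes H$ is maximally distant from another, and then the assembly of the two one-sided conditions into the five listed cases.

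For the first stage I would prove: $(u,v)$ is maximally distant from $(x,y)$ in $G\boxtimes H$ if and only if [$a<b$ or $u$ is maximally distant from $x$ in $G$] and [$b<a$ or $v$ is maximally distant from $y$ in $H$]. To see this, observe that a vertex ranges over $N_{G\boxtimes H}(u,v)$ exactly when it has one of the forms $(u',v)$ with $u'\in N_G(u)$, $(u,v')$ with $v'\in N_H(v)$, or $(u',v')$ with $u'\in N_G(u)$ and $v'\in N_H(v)$; plugging these into the distance formula, the inequality $d_{G\boxtimes H}\big((x,y),(u',v')\big)\le D$ for every such vertex is equivalent to asking that $d_G(x,u')\le D$ for all $u'\in N_G(u)$ and that $d_H(y,v')\le D$ for all $v'\in N_H(v)$ (here one uses $a\le D$ and $b\le D$ for the easy direction, and the neighbours $(u',v)$, $(u,v')$ for the other). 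Now if $b>a$ then $D=b\ge a+1\ge d_G(x,u')$ for every $u'\in N_G(u)$, so the $G$-condition is vacuous and the $H$-condition, since $D=b=d_H(v,y)$, says precisely that $v$ is maximally distant from $y$ in $H$; the case $a>b$ is symmetric; and if $a=b$ both conditions remain, saying that $u$ is maximally distant from $x$ in $G$ and $v$ is maximally distant from $y$ in $H$. This gives the claim, and applying it with the roles of $(u,v)$ and $(x,y)$ exchanged yields the companion characterization of when $(x,y)$ is maximally distant from $(u,v)$.

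In the second stage I would conjoin the two one-sided characterizations and split on the comparison of $a$ and $b$. This yields the trichotomy: if $a>b$ then $(u,v),(x,y)$ are mutually maximally distant in $G\boxtimes H$ iff $u,x$ are mutually maximally distant in $G$; if $a<b$, iff $v,y$ are mutually maximally distant in $H$; and if $a=b$, iff both hold simultaneously. It remains to see this matches exactly the disjunction of (i)--(v). For the forward direction, the case $a=b$ is (i); the case $a>b$ gives (iv), and when in addition $b=0$ (that is, $v=y$) it also gives (ii); the case $a<b$ gives (v), and when $a=0$ it gives (iii). For the converse one checks (i)--(v) one at a time, using the observation that in a connected nontrivial graph a vertex is never maximally distant from itself (it has a neighbour), so that two mutually maximally distant vertices are distinct: thus in (ii) we have $a\ge 1>0=b$, placing us in the case $a>b$; (iii) lands in $a<b$; (iv) and (v) are in the cases $a>b$ and $a<b$ by hypothesis; and (i), giving mutual maximal distance in both factors, satisfies the trichotomy whichever of $a>b$, $a<b$, $a=b$ occurs.

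The step I expect to cost the most effort is the first-stage claim — in particular the reduction of maximal distance in $G\boxtimes H$ to the two coordinate-wise inequalities, together with the remark that one of them is automatically satisfied as soon as the two factor-distances differ (because passing to a neighbour changes a coordinate distance by at most $1$). A minor but necessary bookkeeping point is that the list (i)--(v) is not irredundant: (ii) and (iii) are contained in (iv) and (v), and one must simply confirm that the union of (i)--(v) is exactly the set of mutually maximally distant pairs, which is where distinctness of mutually maximally distant vertices gets used.
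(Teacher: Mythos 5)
Your proposal is correct. It rests on the same two ingredients as the paper's proof -- the distance formula $d_{G\boxtimes H}((a,b),(c,d))=\max\{d_G(a,c),d_H(b,d)\}$ and the fact that neighbourhoods in the strong product are products of closed neighbourhoods -- but you package them differently. The paper verifies sufficiency by checking each of (i)--(v) separately with explicit distance estimates, and then proves necessity by extracting four inequalities from arbitrary neighbours and splitting into the cases $d_G(u,x)\ge d_H(v,y)$ and $d_G(u,x)<d_H(v,y)$. You instead isolate a one-sided key lemma (a clean \emph{iff} characterization of when $(u,v)$ is maximally distant from $(x,y)$, namely that each coordinate condition is either forced or made vacuous according to the sign of $d_G(u,x)-d_H(v,y)$), conjoin it with its mirror image, and reduce the statement to a trichotomy on $\max\{a,b\}$ followed by pure bookkeeping against the list (i)--(v). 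Your route buys a single two-directional argument in place of the paper's seven separate verifications, and it makes explicit something the paper leaves implicit: that the list (i)--(v) is redundant, with (ii) and (iii) subsumed by (iv) and (v) once one notes that mutually maximally distant vertices in a connected nontrivial graph are distinct. The paper's version, in exchange, is more self-contained at each step and displays the concrete inequalities that are reused later in the necessity argument. Both proofs are complete; yours is the tidier logical organization of the same computation.
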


\begin{proof}
(Sufficiency) Let $(u',v')\in N_{G\boxtimes H}(u,v)$ and $(x',y')\in N_{G\boxtimes H}(x,y)$. By Corollary \ref{cor neighbor} we have $u'\in N_G[u]$, $x'\in N_G[x]$, $v'\in N_H[v]$ and $y'\in N_H[y]$.

(i) If $u,x$ are mutually maximally distant in $G$ and $v,y$ are mutually maximally distant in $H$, then
$$d_{G\boxtimes H}((u',v'),(x,y)) = \max \{d_G(u',x), d_H(v',y)\}\le \max \{d_G(u,x), d_H(v,y)\} = d_{G\boxtimes H}((u,v),(x,y))$$ and
$$d_{G\boxtimes H}((u,v),(x,'y')) = \max \{d_G(u,x'), d_H(v,y')\}\le \max \{d_G(u,x), d_H(v,y)\} = d_{G\boxtimes H}((u,v),(x,y)).$$ Thus, $(u,v)$ and $(x,y)$ are mutually maximally distant vertices in $G\boxtimes H$.

(ii) If $u,x$ are mutually maximally distant in $G$ and $v=y$, then
$$d_{G\boxtimes H}((u',v'),(x,y)) = \max \{d_G(u',x), d_H(v',y)\} = d_G(u',x)\le d_G(u,x) = d_{G\boxtimes H}((u,v),(x,y))$$ and
$$d_{G\boxtimes H}((u,v),(x',y')) = \max \{d_G(u,x'), d_H(v,y')\} = d_G(u,x')\le d_G(u,x) = d_{G\boxtimes H}((u,v),(x,y)).$$ Thus, $(u,v)$ and $(x,y)$ are mutually maximally distant vertices in $G\boxtimes H$.

(iii) By using analogous procedure to (ii) we can show that if $u=x$ and $v,y$ are mutually maximally distant in $G$, then $(u,v)$ and $(x,y)$ are mutually maximally distant vertices in $G\boxtimes H$.

(iv) If $u,x$ are mutually maximally distant in $G$ and $d_G(u,x) > d_H(v,y)$, then
\begin{align*}
d_{G\boxtimes H}((u',v'),(x,y))& = \max \{d_G(u',x), d_H(v',y)\}\\
& \le \max \{d_G(u,x), d_H(v,y)+1\}\\
& = \max \{d_G(u,x), d_H(v,y)\}\\
& = d_{G\boxtimes H}((u,v),(x,y))
\end{align*}
and
\begin{align*}
d_{G\boxtimes H}((u,v),(x,'y'))& = \max \{d_G(u,x'), d_H(v,y')\}\\
& \le \max \{d_G(u,x), d_H(v,y)+1\}\\
& = \max \{d_G(u,x), d_H(v,y)\}\\
& = d_{G\boxtimes H}((u,v),(x,y)).
\end{align*}
Thus, $(u,v)$ and $(x,y)$ are mutually maximally distant vertices in $G\boxtimes H$.

(v) By using analogous procedure as in (iv) we can show that if $v,y$ are mutually maximally distant in $H$ and $d_G(u,x) < d_H(v,y)$, then $(u,v)$ and $(x,y)$ are mutually maximally distant vertices in $G\boxtimes H$.

(Necessity) Let $(u,v)$ and $(x,y)$ be two mutually maximally distant vertices in $G\boxtimes H$. Let $u'\in N_G(u)$, $x'\in N_G(x)$, $v'\in N_H(v)$ and $y'\in N_H(y)$. Notice that, by Corollary \ref{cor neighbor} $(u',v')\in N_{G\boxtimes H}(u,v)$ and $(x',y')\in N_{G\boxtimes H}(x,y)$. So, we have that
$$d_{G\boxtimes H}((u,v),(x,y))\ge d_{G\boxtimes H}((u',v'),(x,y))$$
and
$$d_{G\boxtimes H}((u,v),(x,y))\ge d_{G\boxtimes H}((u,v),(x',y')).$$
We differentiate two cases.

Case 1. $d_G(u,x)\ge d_H(v,y)$. Hence, $d_{G\boxtimes H}((u,v),(x,y)) = \max\{d_G(u,x),d_H(v,y)\} = d_G(u,x)$. Thus,
$$d_G(u,x)\ge \max\{d_G(u',x),d_H(v',y)\}$$
and
$$d_G(u,x)\ge \max\{d_G(u,x'),d_H(v,y')\}.$$
So, we obtain four inequalities:
\begin{equation}
d_G(u,x)\ge d_G(u',x),   \label{i1}
\end{equation}
\begin{equation}
d_G(u,x)\ge d_H(v',y),   \label{i2}
\end{equation}
\begin{equation}
d_G(u,x)\ge d_G(u,x'),   \label{i3}
\end{equation}
\begin{equation}
d_G(u,x)\ge d_H(v,y').   \label{i4}
\end{equation}

From (\ref{i1}) and (\ref{i3}) we have, that $u$ and $x$ are mutually maximally distant in $G$. If $v$ and $y$ are mutually maximally distant in $H$, then (i) holds and, if $v=y$, then (ii) holds. Suppose that there exists a vertex $v''\in N_H(v)$ such that $d_H(v'',y) > d_H(v,y)$ or there exists a vertex $y''\in N_H(y)$ such that $d_H(v,y'') > d_H(v,y)$. In such a case,
\begin{equation}
d_H(v'',y)\ge d_H(v,y)+1   \label{i1'}
\end{equation}
or
\begin{equation}
d_H(v,y'')\ge d_H(v,y)+1.   \label{i2'}
\end{equation}
Since $v''\in N_H(v)$, for any $u''\in N_G(u)$ we have $(u'',v'')\in N_{G\boxtimes H}(u,v)$ and following the above procedure, taking $(u'',v'')$ instead of $(u',v')$ we obtain two inequalities equivalent to (\ref{i2}) and (\ref{i4}). Thus,
\begin{equation}
d_G(u,x)\ge d_H(v'',y) > d_H(v,y)
\end{equation}
and
\begin{equation}
d_G(u,x)\ge d_H(v,y'') > d_H(v,y).
\end{equation}
So, $u,x$ are mutually maximally distant in $G$ and $d_G(u,x) > d_H(v,y)$. Hence, (iv) is satisfied.

Case 2. $d_G(u,x) < d_H(v,y)$. By using analogous procedure we can prove that $v,y$ are mutually maximally distant in $H$ and $u=x$ or $d_G(u,x) < d_H(v,y)$, showing that (iii) and (v) hold. Therefore, the result follows.
\end{proof}

Notice that Lemma \ref{lem boundary} leads to the following relationship.

\begin{theorem}\label{th products subgraphs}
For any connected graphs $G$ and $H$,
$$G_{SR}\boxtimes H_{SR}\sqsubseteq (G\boxtimes H)_{SR}\sqsubseteq G_{SR}\oplus H_{SR}.$$
\end{theorem}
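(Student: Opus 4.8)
The plan is to notice first that the three graphs $G_{SR}\boxtimes H_{SR}$, $(G\boxtimes H)_{SR}$ and $G_{SR}\oplus H_{SR}$ all have the same vertex set $V(G)\times V(H)$, so the two claimed subgraph relations amount to two edge-inclusion statements, each of which is a direct translation of Lemma \ref{lem boundary} into the language of graph products. In both directions I would take an arbitrary edge of the smaller graph, unwind the relevant product definition, and read off the corresponding clause of the lemma.

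For the inclusion $G_{SR}\boxtimes H_{SR}\sqsubseteq(G\boxtimes H)_{SR}$, I would take an edge of $G_{SR}\boxtimes H_{SR}$ joining $(u,v)$ and $(x,y)$ and expand the definition of the strong product: one of the three alternatives holds, namely $u=x$ and $vy\in E(H_{SR})$, or $ux\in E(G_{SR})$ and $v=y$, or $ux\in E(G_{SR})$ and $vy\in E(H_{SR})$. Since adjacency in $G_{SR}$ (respectively $H_{SR}$) means mutual maximal distance in $G$ (respectively $H$), these three alternatives are precisely conditions (iii), (ii) and (i) of Lemma \ref{lem boundary}. Hence $(u,v)$ and $(x,y)$ are mutually maximally distant in $G\boxtimes H$, i.e.\ adjacent in $(G\boxtimes H)_{SR}$.

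For the inclusion $(G\boxtimes H)_{SR}\sqsubseteq G_{SR}\oplus H_{SR}$, I would start from an edge of $(G\boxtimes H)_{SR}$, that is, a pair of mutually maximally distant vertices $(u,v)$ and $(x,y)$ of $G\boxtimes H$, and apply Lemma \ref{lem boundary}: one of the conditions (i)--(v) must hold. Scanning the list, conditions (i), (ii), (iv) each assert that $u,x$ are mutually maximally distant in $G$, while conditions (iii), (v) each assert that $v,y$ are mutually maximally distant in $H$. In the first case $ux\in E(G_{SR})$ and in the second $vy\in E(H_{SR})$, so in either case $(u,v)$ and $(x,y)$ are adjacent in $G_{SR}\oplus H_{SR}$ by the definition of the Cartesian sum.

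The only bit of care needed is the distinctness of endpoints: edges in all three product graphs require distinct endpoints, and in a connected nontrivial graph a vertex is never maximally distant from itself (any neighbour sits at distance $1>0$), so adjacency in a strong resolving graph already forces the two endpoints to differ; thus the edges produced in both arguments are genuine edges. I do not expect a real obstacle here, since all the geometric content is already packaged in Lemma \ref{lem boundary}; the remaining work is purely the bookkeeping of matching each product-adjacency condition to the appropriate clause of the lemma.
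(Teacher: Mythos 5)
Your proposal is correct and follows essentially the same route as the paper's proof: both reduce each inclusion to matching the adjacency conditions of the strong product (resp.\ Cartesian sum) against the clauses of Lemma \ref{lem boundary}. The extra remark on distinctness of endpoints is a harmless refinement the paper omits.
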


\begin{proof}
Notice that $V(G_{SR}\boxtimes H_{SR})=V((G\boxtimes H)_{SR})=V(G_{SR}\oplus H_{SR})=V_1\times V_2$. Let $(u,v)$ and $(x,y)$ be two vertices adjacent in $G_{SR}\boxtimes H_{SR}$. So, either
\begin{itemize}
\item $u=x$ and $vy\in E(H_{SR})$, or
\item $ux\in E(G_{SR})$ and $v=y$, or
\item $ux\in E(G_{SR})$ and $vy\in E(H_{SR})$.
\end{itemize}
Hence, by using respectively the condition (iii), (ii) and (i) of Lemma \ref{lem boundary} we have that $(u,v)$ and $(x,y)$ are also adjacent in $(G\boxtimes H)_{SR}$.

Now, let $(u',v')$ and $(x',y')$ be two vertices adjacent in $(G\boxtimes H)_{SR}$. From Lemma \ref{lem boundary} we obtain that $u'x'\in E(G_{SR})$ or $v'y'\in E(H_{SR})$. Thus, $(u',v')$ and $(x',y')$ are also adjacent in $G_{SR}\oplus H_{SR}$.
\end{proof}

\begin{corollary}\label{cor beta products}
For any connected graphs $G$ and $H$,
$$\beta (G_{SR}\boxtimes H_{SR})\ge \beta ((G\boxtimes H)_{SR})\ge \beta(G_{SR}\oplus H_{SR}).$$
\end{corollary}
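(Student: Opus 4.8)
The plan is to derive this directly from Theorem~\ref{th products subgraphs} by exploiting the fact, noted in its proof, that all three graphs $G_{SR}\boxtimes H_{SR}$, $(G\boxtimes H)_{SR}$ and $G_{SR}\oplus H_{SR}$ share the same vertex set $V_1\times V_2$; that is, each of the two containments in Theorem~\ref{th products subgraphs} is a containment of \emph{spanning} subgraphs. This is the only structural input needed.

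First I would record the elementary monotonicity principle: if $G_1$ and $G_2$ are graphs with $V(G_1)=V(G_2)$ and $E(G_1)\subseteq E(G_2)$, then $\beta(G_1)\ge\beta(G_2)$. Indeed, let $I$ be a $\beta(G_2)$-set. No two vertices of $I$ are adjacent in $G_2$, and since every edge of $G_1$ is an edge of $G_2$, no two vertices of $I$ are adjacent in $G_1$ either; thus $I$ is an independent set in $G_1$ of cardinality $\beta(G_2)$, whence $\beta(G_1)\ge|I|=\beta(G_2)$.

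Then I would apply this principle twice. Taking $G_1=G_{SR}\boxtimes H_{SR}$ and $G_2=(G\boxtimes H)_{SR}$, which have the same vertex set and satisfy $E(G_1)\subseteq E(G_2)$ by Theorem~\ref{th products subgraphs}, yields $\beta(G_{SR}\boxtimes H_{SR})\ge\beta((G\boxtimes H)_{SR})$. Taking $G_1=(G\boxtimes H)_{SR}$ and $G_2=G_{SR}\oplus H_{SR}$, again with equal vertex sets and $E(G_1)\subseteq E(G_2)$ by Theorem~\ref{th products subgraphs}, yields $\beta((G\boxtimes H)_{SR})\ge\beta(G_{SR}\oplus H_{SR})$. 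Chaining these two inequalities gives the statement.

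There is essentially no obstacle: the only point requiring care is that monotonicity of $\beta$ under edge addition needs the vertex sets to coincide (otherwise deleting vertices could decrease $\beta$), and this is precisely what the proof of Theorem~\ref{th products subgraphs} supplies. One could equivalently argue through the vertex cover number using Gallai's theorem (Theorem~\ref{th gallai}) together with the obvious bound $\alpha(G_1)\le\alpha(G_2)$ for spanning subgraphs, but the direct argument with independent sets is the shortest route.
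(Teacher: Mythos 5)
Your proof is correct and matches the paper's (implicit) reasoning: the corollary is stated without proof precisely because it follows from the spanning-subgraph containments of Theorem~\ref{th products subgraphs} via the monotonicity of $\beta$ under edge addition on a fixed vertex set, which is exactly the argument you give. Your explicit remark that equality of vertex sets is the point requiring care is a worthwhile clarification, but the route is the same.
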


In order to better understand how the strong resolving graph $(G\boxtimes H)_{SR}$ looks like, by using Lemma \ref{lem boundary}, we prepare a kind of ``graphical representation'' of $(G\boxtimes H)_{SR}$ which we present in Figure \ref{graph SR}. According to the conditions (i), (ii) and (iii) of Lemma \ref{lem boundary} the solid lines represents those edges of $(G\boxtimes H)_{SR}$ which always exists. Also, from the conditions (iv) and (v) of Lemma \ref{lem boundary}, two vertices belonging to different rounded rectangles with identically filled areas could be adjacent or not in $(G\boxtimes H)_{SR}$.

\begin{figure}[h]
  \centering
  \includegraphics[width=0.65\textwidth]{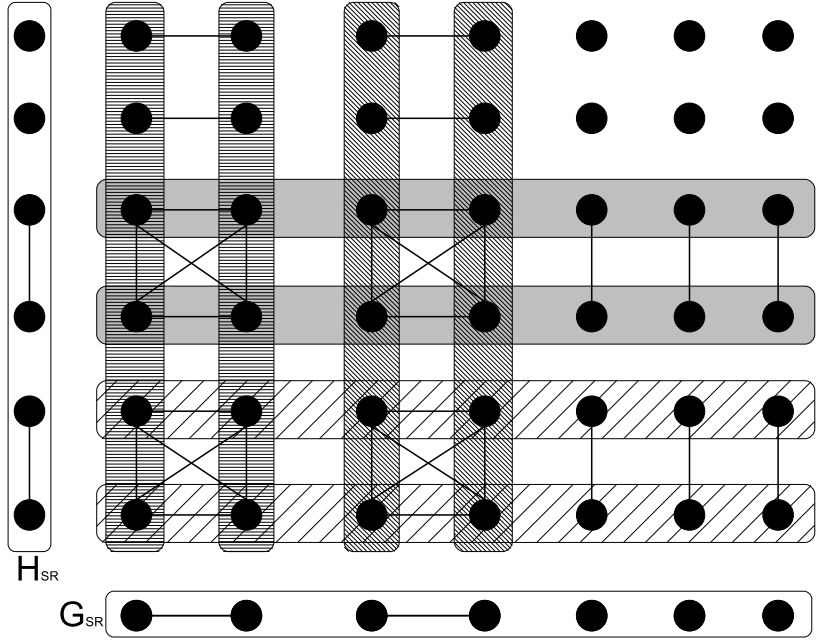}
  \caption{Sketch of a representation of a strong resolving graph $(G\boxtimes H)_{SR}$.}\label{graph SR}
\end{figure}

The following three known results will be useful for our purposes.

\begin{theorem}{\em \cite{product-independence}}\label{th products ind}
For any graphs $G$ and $H$,
$$\beta(G)\cdot \beta(H)\le \beta(G\boxtimes H)\le \beta(G\Box H).$$
\end{theorem}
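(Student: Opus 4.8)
The plan is to prove the two inequalities independently, each directly from the definitions of the strong product and of the Cartesian product.

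\emph{Lower bound $\beta(G)\cdot\beta(H)\le\beta(G\boxtimes H)$.} I would start by fixing a $\beta(G)$-set $A\subseteq V(G)$ and a $\beta(H)$-set $B\subseteq V(H)$, and claim that $A\times B$ is an independent set of $G\boxtimes H$. Suppose, for contradiction, that two distinct vertices $(a,b),(a',b')\in A\times B$ are adjacent in $G\boxtimes H$. By the definition of the strong product one of three cases occurs: $a=a'$ and $bb'\in E(H)$; or $aa'\in E(G)$ and $b=b'$; or $aa'\in E(G)$ and $bb'\in E(H)$. In the first case $b,b'$ are distinct adjacent vertices of $B$, contradicting the independence of $B$ in $H$; in the remaining two cases $a,a'$ are distinct adjacent vertices of $A$, contradicting the independence of $A$ in $G$. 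Hence $A\times B$ is independent in $G\boxtimes H$, so $\beta(G\boxtimes H)\ge|A\times B|=\beta(G)\cdot\beta(H)$.

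\emph{Upper bound $\beta(G\boxtimes H)\le\beta(G\Box H)$.} Here I would use that $G\Box H$ is a spanning subgraph of $G\boxtimes H$: both products have vertex set $V(G)\times V(H)$, and the two conditions defining adjacency in $G\Box H$ are precisely the first two of the three conditions defining adjacency in $G\boxtimes H$, so $G\Box H\sqsubseteq G\boxtimes H$. Since adding edges to a graph cannot increase its independence number, every independent set of $G\boxtimes H$ is also independent in $G\Box H$, and therefore $\beta(G\boxtimes H)\le\beta(G\Box H)$.

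Neither step poses a real obstacle; the only point that needs a moment's attention is that, in the lower bound, the ``diagonal'' adjacency case ($aa'\in E(G)$ and $bb'\in E(H)$) — the one present in $\boxtimes$ but not in $\Box$ — is still ruled out by the independence of $A$ (equivalently, of $B$). This is exactly why the single witness set $A\times B$ works simultaneously for both products, and in fact the lower bound argument also yields $\beta(G)\cdot\beta(H)\le\beta(G\Box H)$.
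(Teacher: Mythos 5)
Your proof is correct: the set $A\times B$ is indeed independent in $G\boxtimes H$ (all three adjacency cases are ruled out by the independence of $A$ or of $B$), and $G\Box H$ being a spanning subgraph of $G\boxtimes H$ immediately gives the upper bound. Note that the paper does not prove this statement at all --- it is quoted as a known result of Jha and Slutzki --- so there is no proof to compare against; your argument is the standard one and is complete.
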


\begin{theorem}{\rm (Vizing's theorem)}\label{th vizing}
For any graphs $G$ and $H$,
$$\beta(G\Box H)\le \min \{\beta(G)|V(H)|,\beta(H)|V(G)|\}.$$
\end{theorem}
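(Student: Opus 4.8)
The plan is to bound the independence number of a Cartesian product by slicing a maximum independent set along the layers of one factor. Fix a maximum independent set $I\subseteq V(G\Box H)$, so that $|I|=\beta(G\Box H)$. For each vertex $b\in V(H)$ put $G^{b}=\{(a,b):a\in V(G)\}$, the \emph{$G$-layer} over $b$. The layers $\{G^{b}:b\in V(H)\}$ partition $V(G\Box H)$ into $|V(H)|$ parts, hence $|I|=\sum_{b\in V(H)}|I\cap G^{b}|$.

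The key observation is that each $G$-layer induces a copy of $G$. Indeed, by the definition of the Cartesian product recalled in the introduction, $(a,b)$ and $(a',b)$ are adjacent in $G\Box H$ if and only if $aa'\in E(G)$; consequently the projection onto the first coordinate maps $I\cap G^{b}$ bijectively onto an independent set of $G$, so $|I\cap G^{b}|\le\beta(G)$ for every $b\in V(H)$. Summing over all layers yields $\beta(G\Box H)=|I|\le\beta(G)\cdot|V(H)|$.

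Finally, I would repeat the same argument with the roles of $G$ and $H$ interchanged, partitioning $V(G\Box H)$ into the $H$-layers $H^{a}=\{(a,b):b\in V(H)\}$, each of which induces a copy of $H$; this gives $\beta(G\Box H)\le\beta(H)\cdot|V(G)|$. Taking the minimum of the two bounds gives the claimed inequality.

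I do not expect a genuine obstacle here: the only point needing (routine) care is the identification of a layer with the corresponding factor graph, which is immediate from the product definition; the rest is a one-line counting argument over a partition into $|V(H)|$ (respectively $|V(G)|$) parts.
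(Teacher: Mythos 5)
Your argument is correct: each $G$-layer of $G\Box H$ induces a copy of $G$, so a maximum independent set meets each of the $|V(H)|$ layers in at most $\beta(G)$ vertices, and the symmetric argument gives the other bound. The paper states this result as a known theorem (attributed to Vizing) without proof, so there is nothing to compare against; your layer-slicing proof is the standard one and fills that gap correctly.
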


\begin{theorem}{\em \cite{lexicographic-independence}}\label{th lexicographic ind}
For any graphs $G$ and $H$,
$$\beta(G\circ H) = \beta(G)\cdot \beta(H).$$
\end{theorem}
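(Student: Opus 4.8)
The plan is to establish the two inequalities $\beta(G\circ H)\ge\beta(G)\cdot\beta(H)$ and $\beta(G\circ H)\le\beta(G)\cdot\beta(H)$ separately, each by a direct combinatorial argument that uses nothing beyond the adjacency rule defining the lexicographic product.

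For the lower bound I would fix a $\beta(G)$-set $A\subseteq V(G)$ and a $\beta(H)$-set $B\subseteq V(H)$ and check that $A\times B$ is an independent set of $G\circ H$. Given distinct vertices $(a,b),(a',b')\in A\times B$, either $a\ne a'$, and then $aa'\notin E(G)$ since $A$ is independent, so the first adjacency clause fails while the second is inapplicable; or $a=a'$ and $b\ne b'$, and then $bb'\notin E(H)$ since $B$ is independent, so the second clause fails as well. In either case $(a,b)\not\sim(a',b')$, whence $\beta(G\circ H)\ge|A\times B|=\beta(G)\cdot\beta(H)$.

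For the upper bound I would take a maximum independent set $W$ of $G\circ H$ and decompose it along the first coordinate: for $a\in V(G)$ set $W_a=\{b\in V(H):(a,b)\in W\}$, and let $A=\{a\in V(G):W_a\ne\emptyset\}$. Two observations then finish the argument. First, each nonempty $W_a$ is independent in $H$, since two adjacent vertices $b,b'\in W_a$ would make $(a,b)$ and $(a,b')$ adjacent in $G\circ H$ by the ``$a=c$ and $bd\in E(H)$'' clause, contradicting the independence of $W$; hence $|W_a|\le\beta(H)$. Second, $A$ is independent in $G$, since adjacent $a,a'\in A$ together with any $b\in W_a$ and $b'\in W_{a'}$ would make $(a,b)$ and $(a',b')$ adjacent in $G\circ H$ by the ``$ac\in E(G)$'' clause; hence $|A|\le\beta(G)$. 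Summing over the fibres, $\beta(G\circ H)=|W|=\sum_{a\in A}|W_a|\le|A|\cdot\beta(H)\le\beta(G)\cdot\beta(H)$, and combining with the lower bound yields the equality.

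I do not expect a genuine obstacle here: the whole proof is elementary. The one point that deserves care is the asymmetry of the adjacency rule of $\circ$, in which the first coordinate ``dominates''; this is precisely what guarantees that projecting $W$ onto $V(G)$ lands inside an independent set of $G$, while each fibre $W_a$ lands inside an independent set of $H$, so that no term is lost in the count. (Note the contrast with the strong and Cartesian products of Theorems~\ref{th products ind} and~\ref{th vizing}, where only inequalities hold, precisely because there the second coordinate can still force adjacency between vertices lying in distinct fibres over nonadjacent first coordinates.)
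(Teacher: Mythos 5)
Your proof is correct and complete: the lower bound via $A\times B$ and the upper bound via the fibre decomposition $W=\bigcup_{a\in A}(\{a\}\times W_a)$ are exactly the standard argument for the Geller--Stahl identity, and your remark about the asymmetry of the adjacency rule correctly identifies why equality holds here but not for the strong or Cartesian products. Note that the paper itself offers no proof of this statement --- it is quoted from the reference \cite{lexicographic-independence} --- so your argument simply supplies a self-contained justification of the cited result; there is nothing in the paper to diverge from.
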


Next we present a lemma about the independence number of Cartesian sum graphs.

\begin{lemma}\label{lem Cartesian sum}
For any graphs $G$ and $H$,
$$\beta(G\oplus H) = \beta(G)\cdot \beta(H).$$
\end{lemma}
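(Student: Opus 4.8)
The plan is to prove the two inequalities $\beta(G\oplus H)\ge \beta(G)\cdot\beta(H)$ and $\beta(G\oplus H)\le \beta(G)\cdot\beta(H)$ separately. For the lower bound I would take a $\beta(G)$-set $A\subseteq V_1$ and a $\beta(H)$-set $B\subseteq V_2$ and check directly from the adjacency rule of the Cartesian sum that $A\times B$ is an independent set of $G\oplus H$: for two distinct vertices $(a,b),(c,d)\in A\times B$ we have $ac\notin E_1$ (since $a,c\in A$, whether or not $a=c$) and $bd\notin E_2$ (since $b,d\in B$), so $(a,b)$ and $(c,d)$ are non-adjacent in $G\oplus H$. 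Hence $\beta(G\oplus H)\ge |A\times B|=\beta(G)\cdot\beta(H)$.

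For the upper bound the key observation is that the lexicographic product $G\circ H$ is a spanning subgraph of $G\oplus H$. Indeed, both graphs have vertex set $V_1\times V_2$, and every edge of $G\circ H$ comes either from $ac\in E_1$ or from the case $a=c$ with $bd\in E_2$; in both situations the defining condition ``$ac\in E_1$ or $bd\in E_2$'' of $G\oplus H$ is met, so $E(G\circ H)\subseteq E(G\oplus H)$, i.e. $G\circ H\sqsubseteq G\oplus H$. Since enlarging the edge set cannot increase the independence number, $\beta(G\oplus H)\le \beta(G\circ H)$, and Theorem \ref{th lexicographic ind} gives $\beta(G\circ H)=\beta(G)\cdot\beta(H)$. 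Combining the two bounds yields the claimed equality.

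If one prefers to avoid invoking Theorem \ref{th lexicographic ind}, the upper bound can also be obtained directly: given a maximum independent set $S$ of $G\oplus H$, its projection $\pi_1(S)\subseteq V_1$ onto the first coordinate is independent in $G$ (otherwise two vertices of $S$ with adjacent first coordinates would be adjacent in $G\oplus H$), so $|\pi_1(S)|\le\beta(G)$; moreover, for each fixed $a\in\pi_1(S)$ the set $\{b\in V_2:(a,b)\in S\}$ is independent in $H$ (otherwise two vertices of $S$ with the same first coordinate and adjacent second coordinates would be adjacent), so it has at most $\beta(H)$ elements. Summing over $a\in\pi_1(S)$ gives $|S|\le\beta(G)\cdot\beta(H)$. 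I expect no real obstacle in this argument; the only care needed is to keep track of which of the two disjuncts in the adjacency definition of $G\oplus H$ is being used at each step.
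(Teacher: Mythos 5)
Your proof is correct and follows essentially the same route as the paper: the lower bound by checking directly from the adjacency rule of $G\oplus H$ that $A\times B$ is independent, and the upper bound by observing that $G\circ H$ is a spanning subgraph of $G\oplus H$ and invoking Theorem \ref{th lexicographic ind}. Your self-contained alternative for the upper bound (projection onto $V_1$ plus independent fibers) is also valid, but the primary argument matches the paper's.
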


\begin{proof}
Let $A$ be a $\beta(G)$-set and let $B$ be a $\beta(H)$-set. Let $S = A\times B$. We will show that $S$ is an independent set in $G\oplus H$. Notice that if $|S|=1$, then $G$, $H$ and $G\oplus H$ are complete graphs and the result follows. Now, we consider case $|S|>1$.
Let $(u,v),(x,y)\in S$. Suppose that $(u,v)$ and $(x,y)$ are adjacent in $G\oplus H$. If $u=x$, then $v$ and $y$ are adjacent in $H$, which is a contradiction. If $v=y$, then analogously we have a contradiction. Now, if $u\ne x$ and $v\ne y$, then $u$ and $x$ are adjacent in $G$ or $v$ and $y$ are adjacent in $H$, which is a contradiction. Thus, $S$ is independent and we have that $\beta(G\oplus H)\ge \beta(G)\cdot \beta(H)$.

On the other hand, from the definitions of Cartesian sum and lexicographic product we have that $V(G\circ H)=V(G\oplus H)$ and $G\circ H\sqsubseteq G\oplus H$. Thus, $\beta(G\circ H)\ge \beta(G\oplus H)$ and by using Theorem \ref{th lexicographic ind} we have that $\beta(G\oplus H)\le \beta(G)\cdot \beta(H)$. Therefore, the result follows.
\end{proof}

\begin{theorem}\label{th bounds}
Let $G$ and $H$ be two connected nontrivial graphs of order $n_1$, $n_2$, respectively. Then
$$\max \{n_2\cdot dim_s(G),n_1\cdot dim_s(H)\} \le dim_s(G\boxtimes H)\le n_2\cdot dim_s(G) + n_1\cdot dim_s(H) - dim_s(G)\cdot dim_s(H).$$
\end{theorem}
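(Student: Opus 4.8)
The plan is to translate everything into statements about independence numbers of strong resolving graphs via the identity (\ref{oellermann-gallai}), and then feed in the sandwich from Theorem \ref{th products subgraphs} (equivalently, Corollary \ref{cor beta products}) together with the known product bounds on the independence number. Two preliminary observations will be used throughout. First, since $V(G_{SR})=V(G)$ and $V(H_{SR})=V(H)$, the graphs $G_{SR}$ and $H_{SR}$ have orders $n_1$ and $n_2$, respectively. Second, combining Theorem \ref{th oellermann} with Gallai's theorem (Theorem \ref{th gallai}) applied to $G_{SR}$ gives $\beta(G_{SR}) = n_1 - dim_s(G)$, and likewise $\beta(H_{SR}) = n_2 - dim_s(H)$.

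For the lower bound, I would start from (\ref{oellermann-gallai}), namely $dim_s(G\boxtimes H) = n_1 n_2 - \beta((G\boxtimes H)_{SR})$, and bound $\beta((G\boxtimes H)_{SR})$ from above. By the left inequality of Corollary \ref{cor beta products}, $\beta((G\boxtimes H)_{SR}) \le \beta(G_{SR}\boxtimes H_{SR})$; applying Theorem \ref{th products ind} and then Vizing's theorem (Theorem \ref{th vizing}) to the graphs $G_{SR}$ and $H_{SR}$ yields $\beta(G_{SR}\boxtimes H_{SR}) \le \beta(G_{SR}\Box H_{SR}) \le \min\{n_2\,\beta(G_{SR}),\, n_1\,\beta(H_{SR})\}$. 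Substituting back and using $\beta(G_{SR}) = n_1 - dim_s(G)$ and $\beta(H_{SR}) = n_2 - dim_s(H)$ converts $n_1 n_2 - \min\{n_2\,\beta(G_{SR}),\, n_1\,\beta(H_{SR})\}$ into $\max\{n_2\,dim_s(G),\, n_1\,dim_s(H)\}$, which is the claimed lower bound.

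For the upper bound, I would again use (\ref{oellermann-gallai}), but this time bound $\beta((G\boxtimes H)_{SR})$ from below via the right inequality of Corollary \ref{cor beta products}: $\beta((G\boxtimes H)_{SR}) \ge \beta(G_{SR}\oplus H_{SR})$. By Lemma \ref{lem Cartesian sum}, $\beta(G_{SR}\oplus H_{SR}) = \beta(G_{SR})\,\beta(H_{SR}) = (n_1 - dim_s(G))(n_2 - dim_s(H))$. Hence $dim_s(G\boxtimes H) \le n_1 n_2 - (n_1 - dim_s(G))(n_2 - dim_s(H))$, and expanding the product gives precisely $n_2\,dim_s(G) + n_1\,dim_s(H) - dim_s(G)\,dim_s(H)$.

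Since essentially all the work is packaged inside the cited results, I do not expect a genuine obstacle here; the only point requiring care is the bookkeeping — applying Vizing's theorem to $G_{SR}$ and $H_{SR}$ (whose orders are $n_1$ and $n_2$) rather than to $G$ and $H$, and using Gallai's theorem on the strong resolving graphs, so that the various $\beta$'s convert cleanly back into $dim_s$'s.
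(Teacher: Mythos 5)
Your proposal is correct and follows exactly the paper's own argument: the lower bound via Corollary \ref{cor beta products}, Theorem \ref{th products ind}, and Vizing's theorem applied to $G_{SR}$ and $H_{SR}$, and the upper bound via Corollary \ref{cor beta products} and Lemma \ref{lem Cartesian sum}, with Gallai's theorem converting $\beta(G_{SR})$ into $n_1 - dim_s(G)$ throughout. Nothing is missing.
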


\begin{proof}
By using Corollary \ref{cor beta products} we have that $\beta(G_{SR}\boxtimes H_{SR})\ge \beta((G\boxtimes H)_{SR})$. Hence, from equality (\ref{oellermann-gallai}), Theorem \ref{th products ind} and Theorem \ref{th vizing} we obtain
\begin{align*}dim_s(G\boxtimes H)& = n_1\cdot n_2 - \beta((G\boxtimes H)_{SR})\\
&\ge n_1\cdot n_2 - \beta(G_{SR}\boxtimes H_{SR})\\
&\ge n_1\cdot n_2 - \beta(G_{SR}\Box H_{SR})\\
&\ge n_1\cdot n_2 - \min\{n_2\cdot \beta(G_{SR}),n_1\cdot \beta(H_{SR})\}\\
& = \max\{n_2(n_1 - \beta(G_{SR})),n_1(n_2 - \beta(H_{SR}))\}\\
& = \max\{n_2\cdot dim_s(G),n_1\cdot dim_s(H)\}.
\end{align*}

On the other hand, from Corollary \ref{cor beta products} it follows $\beta((G\boxtimes H)_{SR})\ge \beta(G_{SR}\oplus H_{SR})$. So, by using (\ref{oellermann-gallai}) and Lemma \ref{lem Cartesian sum} we have
\begin{align*}dim_s(G\boxtimes H)& = n_1\cdot n_2 - \beta((G\boxtimes H)_{SR})\\
&\le n_1\cdot n_2 - \beta(G_{SR}\oplus H_{SR})\\
& = n_1\cdot n_2 - \beta(G_{SR})\cdot \beta(H_{SR})\\
& = n_1\cdot n_2 - (n_1 - dim_s(G))\cdot (n_2 - dim_s(H))\\
& = n_2\cdot dim_s(G) + n_1\cdot dim_s(H) - dim_s(G)\cdot dim_s(H).
\end{align*}
\end{proof}

We define a \emph{$\mathcal{C}$-graph} as a graph $G$ whose vertex set can be partitioned into $\beta(G)$ cliques. Notice that there are several graphs which are $\mathcal{C}$-graphs. For instance, we emphasize the following cases: complete graphs and cycles of even order. In order to prove the next result we also need to introduce the following notation. Given two graphs $G=(V_1,E_1)$, $H=(V_2,E_2)$ and a subset $X$ of vertices of $G\boxtimes H=(V,E)$, the \emph{projections} of $X$ over the graphs $G$ and $H$, respectively, are the following ones
$$P_G(X)=\{u\in V_1\,:\,(u,v)\in X,\,\mbox{ for some } v\in V_2\},$$
$$P_H(X)=\{v\in V_2\,:\,(u,v)\in X,\,\mbox{ for some } u\in V_1\}.$$

\begin{lemma}\label{lem Cgraph}
For any $\mathcal{C}$-graph $G$ and any graph $H$,
$$\beta(G\boxtimes H) = \beta(G) \cdot \beta(H).$$
\end{lemma}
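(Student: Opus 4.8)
The plan is to establish the upper bound $\beta(G\boxtimes H)\le \beta(G)\cdot\beta(H)$, since the reverse inequality $\beta(G\boxtimes H)\ge \beta(G)\cdot\beta(H)$ is already provided by Theorem \ref{th products ind}.

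\emph{Step 1: decompose the vertex set along the clique partition of $G$.} Since $G$ is a $\mathcal{C}$-graph, fix a partition $V(G)=C_1\cup\cdots\cup C_{\beta(G)}$ into $\beta(G)$ cliques. This partitions $V(G\boxtimes H)=V(G)\times V(H)$ into the $\beta(G)$ blocks $C_i\times V(H)$. The key structural observation is that the subgraph of $G\boxtimes H$ induced by $C_i\times V(H)$ coincides with $G[C_i]\boxtimes H=K_{|C_i|}\boxtimes H$: this follows at once from the definition of the strong product together with the fact that for $u,u'\in C_i$ one has $uu'\in E(G)$ precisely when $u\ne u'$, which is exactly the adjacency in $G[C_i]=K_{|C_i|}$.

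\emph{Step 2: compute $\beta(K_m\boxtimes H)$.} I would show $\beta(K_m\boxtimes H)=\beta(H)$ for every positive integer $m$. The inequality ``$\ge$'' is clear, since $\{u_0\}\times B$ is independent in $K_m\boxtimes H$ for any fixed $u_0\in V(K_m)$ and any $\beta(H)$-set $B$. For ``$\le$'', let $W$ be an independent set in $K_m\boxtimes H$ and consider the projection $P_H$ restricted to $W$. If $(u,v),(u',v)\in W$ with $u\ne u'$, then $uu'\in E(K_m)$ and the second coordinates agree, so these vertices are adjacent in $K_m\boxtimes H$, a contradiction; hence $P_H$ is injective on $W$. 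Likewise, if $v$ and $v'$ were adjacent in $H$, then $(u,v)$ and $(u',v')$ would be adjacent in $K_m\boxtimes H$ whether or not $u=u'$; hence $P_H(W)$ is an independent set of $H$, and $|W|=|P_H(W)|\le\beta(H)$.

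\emph{Step 3: assemble.} Given any independent set $S$ of $G\boxtimes H$, each $S\cap(C_i\times V(H))$ is an independent set of the induced subgraph $K_{|C_i|}\boxtimes H$, hence has cardinality at most $\beta(H)$ by Step 2. Summing over the $\beta(G)$ blocks gives $|S|=\sum_{i=1}^{\beta(G)}|S\cap(C_i\times V(H))|\le\beta(G)\cdot\beta(H)$, so $\beta(G\boxtimes H)\le\beta(G)\cdot\beta(H)$. Together with Theorem \ref{th products ind} this yields the claimed equality. I do not anticipate a genuine obstacle: the only points needing care are the identification of the induced subgraph on each block as $K_{|C_i|}\boxtimes H$ and the bookkeeping in the projection argument of Step 2, and both are routine.
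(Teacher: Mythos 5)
Your proof is correct and follows essentially the same route as the paper: partition $V(G)$ into $\beta(G)$ cliques, and bound the portion of an independent set lying over each clique by $\beta(H)$ via the projection onto $H$ (showing the projection is injective and has independent image), then invoke Theorem \ref{th products ind} for the reverse inequality. The only difference is cosmetic — you package the blockwise estimate as the standalone identity $\beta(K_m\boxtimes H)=\beta(H)$, whereas the paper runs the same projection argument directly on each $S_i$.
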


\begin{proof}
Let $A_1, A_2, ... ,A_{\beta(G)}$ be a partition of $V(G)$ such that $A_i$ is a clique for every $i\in \{1,2,...,\beta(G)\}$. Let $S$ be an $\beta(G\boxtimes H)$-set and let $S_i = S\cap (A_i \times V_2)$ for $i\in \{1,2,...,\beta(G)\}$. First we will show that $P_H(S_i)$ is an independent set in $H$. If $|P_H(S_i)|=1$, then $P_H(S_i)$ is an independent set in $H$. If $|P_H(S_i)|\ge 2$, then for any two vertices $x,y\in P_H(S_i)$ there exist $u,v\in A_i$ such that $(u,x),(v,y)\in S_i$. We suppose that $x\sim y$. If $u=v$, then $(u,x)\sim (v,y)$, which is a contradiction. Thus, $u\ne v$. Since $(u,x)\not\sim (v,y)$, we have that $u\not\sim v$, which is a contradiction with the fact that $A_i$ is a clique. Therefore, for every $i\in \{1,2,...,\beta(G)\}$ the projection $P_H(S_i)$ is an independent set in $H$ and $\beta(H)\ge |P_H(S_i)|$.

Now, if $|S_i|>|P_H(S_i)|$ for some $i\in \{1,2,...,\beta(G)\}$, then there exists a vertex $z\in P_H(S_i)$ and two different vertices $a,b\in A_i$ such that $(a,z),(b,z)\in S_i$, and this is a contradiction with the facts that $A_i$ is a clique and $S_i$ is an independent set. Thus, $|S_i| = |P_H(S_i)|$, $i\in \{1,2,...,\beta(G)\}$, and we have the following
$$\beta(G\boxtimes H) = |S| = \sum_{i=1}^{\beta(G)}|S_i| = \sum_{i=1}^{\beta(G)}|P_H(S_i)|\le \beta(G)\cdot \beta(H).$$
Therefore, by using Theorem \ref{th products ind} we conclude the proof.
\end{proof}

\begin{theorem}\label{th Cgraphs value}
Let $G$ and $H$ be two connected nontrivial graphs of order $n_1$, $n_2$, respectively. If $G_{SR}$ is a $\mathcal{C}$-graph, then
$$dim_s(G\boxtimes H) = n_2\cdot dim_s(G) + n_1\cdot dim_s(H) - dim_s(G)\cdot dim_s(H).$$
\end{theorem}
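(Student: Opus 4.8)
The plan is to turn the statement into a computation of $\beta\big((G\boxtimes H)_{SR}\big)$ and then read off $dim_s$ from equality (\ref{oellermann-gallai}). Since $G$ and $H$ are connected, $G\boxtimes H$ is connected, so (\ref{oellermann-gallai}) applies and gives $dim_s(G\boxtimes H) = n_1\cdot n_2 - \beta\big((G\boxtimes H)_{SR}\big)$. Thus it suffices to prove the single equality $\beta\big((G\boxtimes H)_{SR}\big) = \beta(G_{SR})\cdot\beta(H_{SR})$.

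For this I would invoke the sandwich already recorded in Corollary \ref{cor beta products},
$$\beta(G_{SR}\boxtimes H_{SR}) \ge \beta\big((G\boxtimes H)_{SR}\big) \ge \beta(G_{SR}\oplus H_{SR}),$$
and argue that its two outer terms collapse onto $\beta(G_{SR})\cdot\beta(H_{SR})$. The right-hand term is handled by Lemma \ref{lem Cartesian sum} applied to the graphs $G_{SR}$ and $H_{SR}$, which gives $\beta(G_{SR}\oplus H_{SR}) = \beta(G_{SR})\cdot\beta(H_{SR})$. The left-hand term is where the hypothesis is used: $G_{SR}$ is a $\mathcal{C}$-graph and $H_{SR}$ is an arbitrary graph, so Lemma \ref{lem Cgraph} applies (with $G_{SR}$ playing the role of the $\mathcal{C}$-graph and $H_{SR}$ the role of $H$) and yields $\beta(G_{SR}\boxtimes H_{SR}) = \beta(G_{SR})\cdot\beta(H_{SR})$. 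Hence both bounds equal $\beta(G_{SR})\cdot\beta(H_{SR})$, forcing $\beta\big((G\boxtimes H)_{SR}\big) = \beta(G_{SR})\cdot\beta(H_{SR})$.

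It then only remains to translate back to $dim_s$. By Theorem \ref{th oellermann} and Gallai's theorem (Theorem \ref{th gallai}) one has $\beta(G_{SR}) = n_1 - dim_s(G)$ and $\beta(H_{SR}) = n_2 - dim_s(H)$, so substituting into $dim_s(G\boxtimes H) = n_1 n_2 - \beta(G_{SR})\beta(H_{SR})$ and expanding $(n_1 - dim_s(G))(n_2 - dim_s(H))$ gives exactly $n_2\cdot dim_s(G) + n_1\cdot dim_s(H) - dim_s(G)\cdot dim_s(H)$. There is no genuinely hard step here: the substantive work, namely bounding $\beta(G_{SR}\boxtimes H_{SR})$ from above via the clique partition of the $\mathcal{C}$-graph $G_{SR}$, was already done in Lemma \ref{lem Cgraph}, and the structural input relating $(G\boxtimes H)_{SR}$ to $G_{SR}\boxtimes H_{SR}$ and $G_{SR}\oplus H_{SR}$ was done in Theorem \ref{th products subgraphs}; the only point to be careful about is checking that these lemmas genuinely apply to strong resolving graphs (no connectivity of $G_{SR}$ or $H_{SR}$ is needed) and that $G\boxtimes H$ is connected so that (\ref{oellermann-gallai}) is legitimate.
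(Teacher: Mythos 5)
Your proposal is correct and follows essentially the same route as the paper: the lower bound on $dim_s(G\boxtimes H)$ comes from Lemma \ref{lem Cgraph} applied to the $\mathcal{C}$-graph $G_{SR}$ together with Corollary \ref{cor beta products}, and the matching upper bound is exactly the one from Theorem \ref{th bounds} (whose proof is precisely the Cartesian-sum argument via Lemma \ref{lem Cartesian sum} that you inline). The only cosmetic difference is that the paper cites Theorem \ref{th bounds} for the upper bound rather than re-deriving it.
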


\begin{proof}
By using Corollary \ref{cor beta products} we have that $\beta(G_{SR}\boxtimes H_{SR})\ge \beta((G\boxtimes H)_{SR})$. Hence, from equality (\ref{oellermann-gallai}) and Lemma \ref{lem Cgraph} we have
\begin{align*}dim_s(G\boxtimes H)& = n_1\cdot n_2 - \beta((G\boxtimes H)_{SR})\\
&\ge n_1\cdot n_2 - \beta(G_{SR}\boxtimes H_{SR})\\
& = n_1\cdot n_2 - \beta(G_{SR})\cdot \beta(H_{SR})\\
& = n_1\cdot n_2 - (n_1 - dim_s(G))\cdot (n_2 - dim_s(H))\\
& = n_2\cdot dim_s(G) + n_1\cdot dim_s(H) - dim_s(G)\cdot dim_s(H).
\end{align*}
The result now follows from Theorem \ref{th bounds}.
\end{proof}

A {\em cut vertex} in a graph is a vertex whose removal increases the number of connected component and a {\em simplicial vertex} is a vertex $v$ such that the subgraph induced by $N[v]$ is isomorphic to a complete graph. Also, a {\em block} is a maximal biconnected subgraph of the graph. Now, let $\mathfrak{F}$ be the family of sequences of connected graphs $G_1,G_2,...,G_k$, $k\ge 2$, such that $G_1$ is a complete graph $K_{n_1}$, $n_1\ge 2$, and $G_i$, $i\ge 2$, is obtained recursively from $G_{i-1}$ by adding a complete graph $K_{n_i}$, $n_i\ge 2$, and identifying a vertex of $G_{i-1}$ with a vertex in $K_{n_i}$.

From this point we will say that a connected graph $G$ is a \emph{generalized tree}\footnote{In some works those graphs are called block graphs.} if and only if there exists a sequence $\{G_1,G_2,...,G_k\}\in \mathfrak{F}$ such that $G_k=G$ for some $k\ge 2$. Notice that in these generalized trees every vertex is either, a cut vertex or a simplicial vertex. Also, every complete graph used to obtain the generalized tree is a block of the graph. Note that if every $G_i$ is isomorphic to $K_2$, then $G_k$ is a tree, justifying the terminology used.

At next we give examples of graphs for which its strong resolving graphs are $\mathcal{C}$-graphs.

\begin{itemize}
  \item $(K_n)_{SR}$ is isomorphic to $K_n$.
  \item For any complete $k$-partite graph such that at least all but one $p_i\ge 2$, $i\in\{1,2,...,k\}$, $(K_{p_1,p_2,...,p_k})_{SR}$ is isomorphic to the graph $\bigcup_{i=1}^{k}K_{p_i}$.
  \item If $G$ is a generalized tree of order $n$ and $c$ cut vertices, then $G_{SR}$ is isomorphic to the graph $K_{n-c}\cup \left(\bigcup_{i=1}^{c}K_1\right)$.
  \item For any $2$-antipodal\footnote{Notice that for instance cycles of even order are $2$-antipodal graphs.} graph $G$ of order $n$, $G_{SR}$ is isomorphic to the graph $\bigcup_{i=1}^{\frac{n}{2}}K_2$.
  \item For any grid graph, $(P_n\Box P_r)_{SR}$ is isomorphic to the graph $K_4\cup \left(\bigcup_{i=1}^{n\cdot r-4}K_1\right)$.
\end{itemize}

By using the above examples and Theorem \ref{th Cgraphs value} we have the following corollary.

\begin{corollary}\label{cor Cgraphs}
Let $G$ and $H$ be two connected nontrivial graphs of order $n_1$ and $n_2$, respectively.
\begin{enumerate}[{\rm(i)}]
  \item $dim_s(K_{n_1}\boxtimes H) = n_2(n_1 - 1) + n_1\cdot dim_s(H) - (n_1 - 1)dim_s(H)$.
  \item If $G$ is a complete $k$-partite graph, then $$dim_s(G\boxtimes H) = n_2(n_1 - k) + n_1\cdot dim_s(H) - (n_1 - k)dim_s(H).$$
  \item If $G$ is a generalized tree with $c$ cut vertices, then $$dim_s(G\boxtimes H) = n_2(n_1 - c - 1) + n_1\cdot dim_s(H) - (n_1 - c - 1)dim_s(H).$$ Particularly, if $G$ is a tree with $l(G)$ leaves, then $$dim_s(G\boxtimes H) = n_2(l(G) - 1) + n_1\cdot dim_s(H) - (l(G) - 1)dim_s(H).$$
  \item If $G$ is a $2$-antipodal graph, then $$dim_s(G\boxtimes H) = \frac{n_2\cdot n_1}{2} + n_1\cdot dim_s(H) - \frac{n_1}{2}\cdot dim_s(H).$$
  \item If $G$ is a grid graph, then $$dim_s(G\boxtimes H) = 3n_2 + n_1\cdot dim_s(H) - 3dim_s(H).$$
\end{enumerate}
\end{corollary}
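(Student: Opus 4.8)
The plan is to derive all five formulas uniformly, by combining the list of examples stated just above with Theorems~\ref{th oellermann}, \ref{th gallai} and~\ref{th Cgraphs value}. For a graph $G$ belonging to any of the five families, the corresponding example describes $G_{SR}$ as a vertex-disjoint union of complete graphs. Such a graph is automatically a $\mathcal{C}$-graph: since $\beta$ of a disjoint union of cliques equals the number of components, the components themselves already form a partition of $V(G_{SR})$ into $\beta(G_{SR})$ cliques. Hence Theorem~\ref{th Cgraphs value} applies to each such $G$, and it only remains to evaluate $dim_s(G)$, which by Theorem~\ref{th oellermann} together with Gallai's theorem (Theorem~\ref{th gallai}) equals $\alpha(G_{SR}) = n_1 - \beta(G_{SR})$; here $\beta(G_{SR})$ is simply the number of clique-components in the quoted description of $G_{SR}$.

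Carrying this out family by family: in~(i), $(K_{n_1})_{SR}\cong K_{n_1}$, so $\beta(G_{SR})=1$ and $dim_s(K_{n_1})=n_1-1$. In~(ii), $(K_{p_1,\dots,p_k})_{SR}\cong\bigcup_{i=1}^{k}K_{p_i}$, so $\beta(G_{SR})=k$ and $dim_s(G)=n_1-k$. In~(iii), a generalized tree of order $n_1$ with $c$ cut vertices has $G_{SR}\cong K_{n_1-c}\cup\left(\bigcup_{i=1}^{c}K_1\right)$, so $\beta(G_{SR})=c+1$ and $dim_s(G)=n_1-c-1$; for the tree specialization one further uses that every non-leaf of a tree is a cut vertex, so that $c=n_1-l(G)$ and therefore $dim_s(G)=l(G)-1$. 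In~(iv), a $2$-antipodal graph necessarily has even order $n_1$ and $G_{SR}\cong\bigcup_{i=1}^{n_1/2}K_2$, so $\beta(G_{SR})=n_1/2$ and $dim_s(G)=n_1/2$. In~(v), for a grid $G=P_n\Box P_r$ of order $n_1=nr$ one has $G_{SR}\cong K_4\cup\left(\bigcup_{i=1}^{n_1-4}K_1\right)$, so $\beta(G_{SR})=n_1-3$ and $dim_s(G)=3$. Substituting each of these values of $dim_s(G)$ into the equality of Theorem~\ref{th Cgraphs value} gives precisely the five displayed formulas.

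I do not expect a genuine obstacle here: once the structural descriptions of the strong resolving graphs are granted --- and they are recalled immediately before the corollary --- the proof is essentially bookkeeping. The only delicate points are computing $\beta(G_{SR})$ correctly in each family (equivalently, counting the clique-components) and, in item~(iii), translating ``number of cut vertices of a tree'' into ``number of leaves''; one should also keep in mind that items~(iv) and~(v) implicitly assume the parity/size conditions under which the quoted forms of $G_{SR}$ hold.
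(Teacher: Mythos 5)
Your proposal is correct and follows exactly the route the paper intends: the paper derives this corollary by observing that each listed strong resolving graph is a disjoint union of cliques (hence a $\mathcal{C}$-graph), applying Theorem \ref{th Cgraphs value}, and computing $dim_s(G)=n_1-\beta(G_{SR})$ in each case. Your case-by-case evaluations of $\beta(G_{SR})$ and $dim_s(G)$, including the tree specialization via $c=n_1-l(G)$, all check out.
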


Notice that Corollary \ref{cor Cgraphs} (iv) gives the value for the strong metric dimension of $C_r\boxtimes H$ for any graph $H$ and $r$ even. Next we study separately the strong product graphs $C_r\boxtimes H$ for any graph $H$ and $r$ odd. In order to prove the next result we need to introduce the following notation. We define a \emph{$\mathcal{C}_1$-graph} as a graph $G$ whose vertex set can be partitioned into $\beta(G)$ cliques and one isolated vertex. Notice that cycles with odd order are $\mathcal{C}_1$-graphs.

\begin{lemma}\label{lem C1graph}
For any $\mathcal{C}_1$-graph $G$ and any graph $H$,
$$\beta(G\boxtimes H)\le \beta(G)(\beta(H)+1).$$
\end{lemma}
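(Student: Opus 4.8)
The plan is to run the projection argument behind Lemma~\ref{lem Cgraph}, now keeping track of one extra slab. First I would dispose of the trivial case: if $G$ happens to be a $\mathcal{C}$-graph the claim is immediate from Lemma~\ref{lem Cgraph} together with $\beta(G)(\beta(H)+1)\ge\beta(G)\beta(H)$, so assume not. Fix a partition $V(G)=A_1\cup\dots\cup A_{\beta(G)}\cup\{w\}$ of $V(G)$ into $\beta(G)$ cliques and the vertex $w$; since $G$ is connected and nontrivial, $w$ has a neighbour $w'$, say $w'\in A_j$. Let $S$ be a $\beta(G\boxtimes H)$-set and write $S_i=S\cap(A_i\times V(H))$ for $i\in\{1,\dots,\beta(G)\}$ and $S_0=S\cap(\{w\}\times V(H))$.

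For each clique $A_i$ the argument of Lemma~\ref{lem Cgraph} applies verbatim: $P_H(S_i)$ is an independent set of $H$, and the columns of $S_i$ are pairwise disjoint, so $|S_i|=|P_H(S_i)|\le\beta(H)$. Hence $\sum_{i=1}^{\beta(G)}|S_i|\le \beta(G)\beta(H)$, and what remains is to show that the $w$-slab contributes at most $\beta(G)$ more, that is, that $|S_0|\le\beta(G)$ or that any surplus is absorbed by a deficit among the clique-slabs near $w$.

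The mechanism I would use is the following. Put $I_0=P_H(S_0)$, an independent set of $H$ with $|S_0|=|I_0|$. Because $w\sim u$ for every $u\in N_G(w)$, whenever $(w,h)\in S$ and $h'\in N_H[h]$ the vertex $(u,h')$ cannot lie in $S$; thus every column $\{h':(u,h')\in S\}$ with $u\in N_G(w)$ is contained in $V(H)\setminus N_H[I_0]$, and consequently $I_0$ together with $P_H\big(S_i\cap((A_i\cap N_G(w))\times V(H))\big)$ is again independent in $H$. Applying this to the clique(s) meeting $N_G(w)$ bounds their contribution by $\beta(H)-|I_0|$ each; meanwhile, for the set $B=V(G)\setminus N_G[w]$ the induced subgraph $G[B]$ still carries a clique partition into at most $\beta(G)$ cliques, so (when $G[B]$ is a $\mathcal{C}$-graph) $|S\cap(B\times V(H))|\le\beta(G[B])\beta(H)$ by Lemma~\ref{lem Cgraph}. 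Assembling these estimates together with $|S_0|=|I_0|$ is what should produce the stated bound.

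I expect the delicate point to be exactly this last bookkeeping for the $w$-slab. The naive projection alone only gives $|S_0|\le\beta(H)$, i.e.\ the weaker estimate $(\beta(G)+1)\beta(H)$; to squeeze it down to $\beta(G)(\beta(H)+1)$ one must genuinely exploit that every column over $N_G(w)$ loses $|I_0|$ worth of room, and that $G$ being connected ties the clique containing $w'$ to the rest of the graph. The case split one has to balance — small versus large $|I_0|$, and few versus many neighbours of $w$ (hence few versus many ``affected'' cliques, against the corresponding size of $G[B]$) — is where the real work lies; everything else is the same disjoint-column projection bookkeeping as in Lemma~\ref{lem Cgraph}.
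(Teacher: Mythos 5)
Your completed bookkeeping is, in fact, the entirety of the paper's proof: for each of the $\beta(G)$ cliques the projection argument of Lemma~\ref{lem Cgraph} gives $|S_i|=|P_H(S_i)|\le\beta(H)$, and the slab over the extra vertex $w$ induces a copy of $H$, so $|S_0|=|P_H(S_0)|\le\beta(H)$ as well; summing yields $\beta(G\boxtimes H)\le\beta(G)\beta(H)+\beta(H)=(\beta(G)+1)\beta(H)$. The paper stops exactly there --- its final written equality $\beta(G)\beta(H)+\beta(H)=\beta(G)(\beta(H)+1)$ is a slip, since the two sides agree only when $\beta(G)=\beta(H)$. So the ``weaker estimate'' you set aside as insufficient is precisely what the paper establishes, and you were right to notice that it does not give the literal statement.

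The genuine problem is that the extra work you outline cannot be completed, because the inequality $\beta(G\boxtimes H)\le\beta(G)\beta(H)+\beta(G)$ is false under the lemma's hypotheses. Take $G=C_5$, a $\mathcal{C}_1$-graph with $\beta(G)=2$, and let $H$ be the disjoint union of three copies of $C_5$ (the lemma allows any $H$, and in the intended application the role of $H$ is played by $H_{SR}$, which is typically disconnected). Then $\beta(H)=6$, and since the strong product splits over the components of one factor, $\beta(G\boxtimes H)=3\,\beta(C_5\boxtimes C_5)=3\cdot 5=15$ by Theorem~\ref{th Codd Codd indep}, while $\beta(G)(\beta(H)+1)=14$. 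Your proposed mechanism also leans on hypotheses the lemma does not grant: $G$ is not assumed connected, and the ``isolated vertex'' of a $\mathcal{C}_1$-graph is a singleton block of the partition rather than a vertex without neighbours, so $N_G(w)$ need not behave as your absorption argument requires. The correct move is therefore not to refine the $w$-slab but to prove (and state) the bound $\beta(G\boxtimes H)\le\beta(H)(\beta(G)+1)$, which your first two paragraphs already accomplish; the discrepancy you detected is a defect of the statement, not a gap left for the proof to fill.
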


\begin{proof}
Let $A_1, A_2, ... ,A_{\beta(G)}, B$ be a partition of $V(G)$ such that $A_i$ is a clique for every $i\in \{1,2,...,\beta(G)\}$ and $B=\{b\}$, where $b$ is isolated vertex. Let $S$ be an $\beta(G\boxtimes H)$-set and let $S_i = S\cap (A_i\times V_2)$ and $i\in \{1,2,...,\beta(G)\}$. Let $S_B = S\cap (B\times V_2)$. By using analogous procedures as in proof of Lemma \ref{lem Cgraph} we can show that for every $i\in \{1,2,...,\beta(G)\}$, $P_H(S_i)$ is an independent set in $H$ and $|S_i| = |P_H(S_i)|$. Moreover, since $|B| = 1$ we have that $P_H(S_B)$ is an independent set in $H$ and $|S_B| = |P_H(S_B)|$. Thus, we obtain the following
$$\beta(G\boxtimes H) = |S| = \sum_{i=1}^{\beta(G)}|S_i| + |S_B| = \sum_{i=1}^{\beta(G)}|P_H(S_i)| + |P_H(S_B)|\le \beta(G)\cdot \beta(H) + \beta(H) = \beta(G)(\beta(H)+1).$$
\end{proof}

\begin{theorem}\label{th C1graphs bounds}
Let $G$ and $H$ be two connected nontrivial graphs of order $n_1$, $n_2$, respectively. If $G_{SR}$ is a $\mathcal{C}_1$-graph, then
$$dim_s(G\boxtimes H)\ge n_1(dim_s(H) - 1) + dim_s(G)(n_2 - dim_s(H) + 1).$$
\end{theorem}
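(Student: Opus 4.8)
The plan is to follow exactly the template of the proofs of Theorem \ref{th bounds} and Theorem \ref{th Cgraphs value}: combine the Oellermann--Gallai identity (\ref{oellermann-gallai}) with the graph-sandwich inequalities of Corollary \ref{cor beta products} and the new counting bound of Lemma \ref{lem C1graph}. First I would start from
$$dim_s(G\boxtimes H) = n_1\cdot n_2 - \beta((G\boxtimes H)_{SR}),$$
and use the left-hand inequality of Corollary \ref{cor beta products}, namely $\beta((G\boxtimes H)_{SR})\le \beta(G_{SR}\boxtimes H_{SR})$, which turns the identity above into the lower bound $dim_s(G\boxtimes H)\ge n_1 n_2 - \beta(G_{SR}\boxtimes H_{SR})$.

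Next I would apply Lemma \ref{lem C1graph} with $G_{SR}$ in the role of the $\mathcal{C}_1$-graph and $H_{SR}$ in the role of the arbitrary factor; by hypothesis $G_{SR}$ is a $\mathcal{C}_1$-graph, so this is legitimate and yields $\beta(G_{SR}\boxtimes H_{SR})\le \beta(G_{SR})(\beta(H_{SR})+1)$. Substituting gives
$$dim_s(G\boxtimes H)\ge n_1\cdot n_2 - \beta(G_{SR})\bigl(\beta(H_{SR})+1\bigr).$$

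Finally I would translate the strong-resolving-graph parameters back into strong metric dimensions: by Theorem \ref{th oellermann} and Gallai's theorem (Theorem \ref{th gallai}) one has $\beta(G_{SR}) = n_1 - \alpha(G_{SR}) = n_1 - dim_s(G)$ and, likewise, $\beta(H_{SR}) = n_2 - dim_s(H)$. Plugging these in and expanding $n_1 n_2 - (n_1 - dim_s(G))(n_2 - dim_s(H)+1)$ and collecting terms produces
\begin{align*}
dim_s(G\boxtimes H)&\ge n_1 n_2 - \bigl(n_1 - dim_s(G)\bigr)\bigl(n_2 - dim_s(H) + 1\bigr)\\
&= n_1\bigl(dim_s(H) - 1\bigr) + dim_s(G)\bigl(n_2 - dim_s(H) + 1\bigr),
\end{align*}
which is the claimed inequality.

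As for the main point requiring care: there is no real obstacle here — the argument is pure bookkeeping once Lemma \ref{lem C1graph} is available. The only subtlety is that the $\mathcal{C}_1$-hypothesis is imposed on $G_{SR}$ (not on $G$), which is precisely what allows Lemma \ref{lem C1graph} to be invoked with the factors $G_{SR}$ and $H_{SR}$; and, unlike the $\mathcal{C}$-graph case of Theorem \ref{th Cgraphs value}, Lemma \ref{lem C1graph} supplies only an upper bound on $\beta(G_{SR}\boxtimes H_{SR})$ (there is no matching lower bound, since an extra isolated vertex in $G_{SR}$ breaks the exact clique partition), so this route produces a lower bound on $dim_s(G\boxtimes H)$ only, in agreement with the form of the statement.
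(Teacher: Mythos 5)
Your proposal is correct and follows essentially the same route as the paper's own proof: the identity (\ref{oellermann-gallai}), the inequality $\beta((G\boxtimes H)_{SR})\le \beta(G_{SR}\boxtimes H_{SR})$ from Corollary \ref{cor beta products}, Lemma \ref{lem C1graph} applied to $G_{SR}$ and $H_{SR}$, and the Gallai substitution $\beta(G_{SR})=n_1-dim_s(G)$, $\beta(H_{SR})=n_2-dim_s(H)$. The algebra checks out, so nothing further is needed.
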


\begin{proof}
By using Corollary \ref{cor beta products} we have that $\beta(G_{SR}\boxtimes H_{SR})\ge \beta((G\boxtimes H)_{SR})$. Hence, from equality (\ref{oellermann-gallai}) and Lemma \ref{lem C1graph} we have
\begin{align*}dim_s(G\boxtimes H)& = n_1\cdot n_2 - \beta((G\boxtimes H)_{SR})\\
&\ge n_1\cdot n_2 - \beta(G_{SR}\boxtimes H_{SR})\\
&\ge n_1\cdot n_2 - \beta(G_{SR})(\beta(H_{SR})+1)\\
& = n_1\cdot n_2 - (n_1 - dim_s(G))\cdot (n_2 - dim_s(H) + 1)\\
& = n_1(dim_s(H) - 1) + dim_s(G)(n_2 - dim_s(H) + 1).
\end{align*}
\end{proof}

Since $dim_s(C_{2r+1}) = r + 1$, Theorems \ref{th bounds} and \ref{th C1graphs bounds} lead to the following result.
\begin{theorem}\label{th Cood H bounds}
Let $H$ be a connected nontrivial graphs of order $n$ and $r\ge 1$. Then
$$n(r + 1) + r(dim_s(H) - 1)\le dim_s(C_{2r+1}\boxtimes H)\le n(r + 1) + r\cdot dim_s(H).$$
\end{theorem}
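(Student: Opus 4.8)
The plan is to specialize Theorems \ref{th bounds} and \ref{th C1graphs bounds} to the factors $G = C_{2r+1}$ and $H$, using $n_1 = |V(C_{2r+1})| = 2r+1$ and $dim_s(C_{2r+1}) = r+1$. The upper bound is then immediate: Theorem \ref{th bounds} gives $dim_s(C_{2r+1}\boxtimes H) \le n_2\cdot dim_s(G) + n_1\cdot dim_s(H) - dim_s(G)\cdot dim_s(H)$, and substituting $dim_s(G) = r+1$ collapses the $dim_s(H)$ terms to $(2r+1-(r+1))\,dim_s(H) = r\cdot dim_s(H)$, so the right-hand side equals $n(r+1) + r\cdot dim_s(H)$.

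For the lower bound I would first verify the hypothesis of Theorem \ref{th C1graphs bounds}, namely that $(C_{2r+1})_{SR}$ is a $\mathcal{C}_1$-graph; this is the only step that is not routine bookkeeping. Since $D(C_{2r+1}) = r$ and in an odd cycle a pair of vertices is mutually maximally distant exactly when it realizes the diameter, each vertex $i$ (identifying $V(C_{2r+1})$ with $\mathbb{Z}_{2r+1}$) is mutually maximally distant with precisely $i+r$ and $i-r$. Hence $(C_{2r+1})_{SR}$ is the circulant graph on $\mathbb{Z}_{2r+1}$ with connection set $\{r,-r\}$, which is connected because $\gcd(r,2r+1)=1$; therefore $(C_{2r+1})_{SR}\cong C_{2r+1}$ (which, incidentally, recovers $dim_s(C_{2r+1}) = \alpha((C_{2r+1})_{SR}) = r+1$ via Theorems \ref{th oellermann} and \ref{th gallai}). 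Since cycles of odd order are $\mathcal{C}_1$-graphs, Theorem \ref{th C1graphs bounds} applies with $G = C_{2r+1}$.

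It then remains to substitute $n_1 = 2r+1$, $dim_s(G) = r+1$ and $n_2 = n$ into the lower bound $n_1(dim_s(H)-1) + dim_s(G)(n_2 - dim_s(H) + 1)$ of Theorem \ref{th C1graphs bounds}: grouping the $dim_s(H)$ terms as $(2r+1-(r+1))\,dim_s(H) = r\cdot dim_s(H)$ and the constant terms as $-(2r+1)+(r+1) = -r$, the expression simplifies to $n(r+1) + r(dim_s(H)-1)$, which matches the claimed lower bound. The main obstacle is thus the structural identification $(C_{2r+1})_{SR}\cong C_{2r+1}$ (equivalently, that $(C_{2r+1})_{SR}$ is a $\mathcal{C}_1$-graph); once this is established the theorem follows by plugging the numbers into Theorems \ref{th bounds} and \ref{th C1graphs bounds} and simplifying.
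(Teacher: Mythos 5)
Your proposal is correct and follows essentially the same route as the paper, which derives the result directly from Theorems \ref{th bounds} and \ref{th C1graphs bounds} together with $dim_s(C_{2r+1})=r+1$. Your explicit verification that $(C_{2r+1})_{SR}\cong C_{2r+1}$ is a $\mathcal{C}_1$-graph is a welcome addition, since the paper leaves that hypothesis check implicit.
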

The independence number of $C_{2r+1}\boxtimes C_{2t+1}$ was studied in \cite{hales}. There was presented the following result.

\begin{theorem}{\em \cite{hales}}\label{th Codd Codd indep}
For $1\le r\le t$,
$$\beta(C_{2r+1}\boxtimes C_{2t+1}) = r\cdot t + \left\lfloor \frac{r}{2} \right\rfloor.$$
\end{theorem}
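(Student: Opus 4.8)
The plan is to prove separately that $\beta(C_{2r+1}\boxtimes C_{2t+1})\le rt+\left\lfloor\frac{r}{2}\right\rfloor$ and that $\beta(C_{2r+1}\boxtimes C_{2t+1})\ge rt+\left\lfloor\frac{r}{2}\right\rfloor$. Identify the vertex set of $C_{2r+1}\boxtimes C_{2t+1}$ with $\mathbb{Z}_{2r+1}\times\mathbb{Z}_{2t+1}$, calling the first coordinate the \emph{row} and the second the \emph{column}; thus two distinct vertices $(i,j)$ and $(i',j')$ are adjacent exactly when $i,i'$ are at distance at most $1$ in $C_{2r+1}$ and $j,j'$ are at distance at most $1$ in $C_{2t+1}$. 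The upper bound is a short double-counting argument that uses that $2t+1$ is odd and $\beta(C_{2r+1})=r$, and I would do it first. The lower bound needs an explicit independent set attaining the bound; producing one, and in particular making it ``close up'' consistently around the odd cycle of columns when $r$ is odd, is the step I expect to be the main obstacle.

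For the upper bound, let $S$ be an independent set of $C_{2r+1}\boxtimes C_{2t+1}$ and, for $j\in\mathbb{Z}_{2t+1}$, put $R_j=\{i\in\mathbb{Z}_{2r+1}:(i,j)\in S\}$. If some $i$ lay in $R_j\cap R_{j+1}$ then $(i,j)$ and $(i,j+1)$ would be adjacent, so $R_j\cap R_{j+1}=\varnothing$. Moreover any two elements of $R_j\cup R_{j+1}$ are at distance at least $2$ in $C_{2r+1}$: for a pair from the same column this is forced because the two vertices are non-adjacent with equal second coordinate, and for a pair with one vertex in column $j$ and one in column $j+1$ it holds since those columns are adjacent in $C_{2t+1}$. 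Hence $R_j\cup R_{j+1}$ is an independent set of $C_{2r+1}$, so $|R_j|+|R_{j+1}|=|R_j\cup R_{j+1}|\le\beta(C_{2r+1})=r$ for every $j$. Summing over $j\in\mathbb{Z}_{2t+1}$ gives $2|S|=\sum_{j}\bigl(|R_j|+|R_{j+1}|\bigr)\le(2t+1)r$, whence $|S|\le\left\lfloor\frac{(2t+1)r}{2}\right\rfloor=rt+\left\lfloor\frac{r}{2}\right\rfloor$. (The hypothesis $r\le t$ enters here: if $t<r$ one sums the analogous inequalities over the rows instead.)

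For the lower bound it suffices to find sets $R_j\subseteq\mathbb{Z}_{2r+1}$, $j\in\mathbb{Z}_{2t+1}$, with each $R_j\cup R_{j+1}$ independent in $C_{2r+1}$ and $\sum_j|R_j|=rt+\left\lfloor\frac{r}{2}\right\rfloor$: then $S=\bigcup_j\bigl(R_j\times\{j\}\bigr)$ is independent in $C_{2r+1}\boxtimes C_{2t+1}$ with $|S|=\sum_j|R_j|$, since columns at distance at least $2$ in $C_{2t+1}$ produce no adjacencies and all remaining adjacencies are excluded by independence of the $R_j\cup R_{j+1}$. Assume first $r=2q$, so $2r+1=4q+1$. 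For $s\in\mathbb{Z}_{4q+1}$ let $P(s)=\{s,s+4,s+8,\dots,s+4(q-1)\}$; this is an independent set of $C_{4q+1}$ of size $q$, one has $P(s)\cap P(s\pm2)=\varnothing$, and $P(s)\cup P(s\pm2)$ is again an independent set of $C_{4q+1}$ (a run of $2q$ vertices at pairwise distance $2$ closing with one gap of length $3$) of size $2q=r$. Since $2t+1\ge 4q+1$ (which is precisely $r\le t$) and $2t+1$ is odd, one may choose $\varepsilon_0,\dots,\varepsilon_{2t}\in\{+1,-1\}$ with $\sum_j\varepsilon_j=4q+1$; with $s_0=0$ and $s_{j+1}=s_j+2\varepsilon_j$ one gets $s_{2t+1}\equiv s_0\pmod{4q+1}$, so $R_j:=P(s_j)$ is well defined around the cycle, each $R_j\cup R_{j+1}=P(s_j)\cup P(s_j\pm2)$ is independent in $C_{2r+1}$, and $\sum_j|R_j|=q(2t+1)=rt+\frac{r}{2}$.

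When $r=2q+1$ is odd the target is $q(2t+1)+t$, so $t$ columns must carry $q+1$ points and the remaining $t+1$ columns $q$ points each; as $|R_j|+|R_{j+1}|\le r$, the columns carrying $q+1$ points must be pairwise non-adjacent in $C_{2t+1}$, hence form a maximum independent set of $C_{2t+1}$, and any maximum independent set of the odd cycle $C_{2t+1}$ has exactly one ``double gap'' of two consecutive $q$-columns. Whenever columns $j,j+1$ carry $q+1$ and $q$ points, $R_j\cup R_{j+1}$ then has $2q+1=r$ elements and so is a maximum independent set of $C_{2r+1}$, while at the two $q$-columns of the double gap only independence of their union is required. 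The delicate point, which I expect to be the real obstacle, is that a single repeated pair of row-patterns cannot work: it would force the two $q$-columns at the double gap to receive equal row-sets (hence adjacent vertices) or row-sets whose union is too large to be independent. One must instead let the maximum independent set of $C_{2r+1}$ used at step $j$ rotate by a controlled amount as $j$ goes around $\mathbb{Z}_{2t+1}$, chosen so that consecutive unions remain independent, the family closes up after $2t+1$ steps, and the two patterns meeting at the double gap are distinct; the case $r=1$ is trivial, so one may take $q\ge 1$. For instance, for $r=t=3$ the sets $R_0=\{0,2\}$, $R_1=\{4\}$, $R_2=\{6,1\}$, $R_3=\{3\}$, $R_4=\{5,0\}$, $R_5=\{2\}$, $R_6=\{4\}$ in $C_7$ give an independent set of size $10$ in $C_7\boxtimes C_7$. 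Carrying out this rotation bookkeeping uniformly in $r$ and $t$ with $r\le t$ is the technical heart of the lower bound; together with the second paragraph it yields the equality $\beta(C_{2r+1}\boxtimes C_{2t+1})=rt+\left\lfloor\frac{r}{2}\right\rfloor$.
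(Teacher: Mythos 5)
This theorem is quoted in the paper from Hales's 1973 article; the paper itself contains no proof of it, so there is nothing internal to compare your argument against, and I have assessed it on its own terms. Your upper bound is correct and complete: the observation that $R_j\cup R_{j+1}$ is an independent set of $C_{2r+1}$ (so $|R_j|+|R_{j+1}|\le r$), followed by summing over the $2t+1$ columns, cleanly gives $|S|\le\left\lfloor\tfrac{(2t+1)r}{2}\right\rfloor=rt+\left\lfloor\tfrac{r}{2}\right\rfloor$, and the choice to sum over columns rather than rows is exactly where $r\le t$ is used. Your lower bound for even $r=2q$ is also complete and correct: the patterns $P(s)$ of size $q$ in $C_{4q+1}$, shifted by $\pm2$ at each step with the signs chosen so that the total shift is $2(4q+1)\equiv 0$, do close up around $C_{2t+1}$ precisely because $4q+1\le 2t+1$, and every consecutive union is an arithmetic run of step $2$ closing with a gap of $3$, hence independent.

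The genuine gap is the lower bound for odd $r=2q+1$, which you explicitly leave unfinished. You correctly derive the necessary structure (exactly $t$ columns of size $q+1$ forming a maximum independent set of $C_{2t+1}$, one double gap of two size-$q$ columns, every heavy--light union a maximum independent set of $C_{4q+3}$), and your $r=t=3$ example checks out, but identifying the constraints is not the same as exhibiting sets that satisfy them. The difficulty is real: if you try the natural choice of heavy pattern $A(s)=\{s,s+4,\dots,s+4q\}$ and light pattern $B(s)=\{s+2,s+6,\dots,s+4q-2\}$, whose union is the maximum independent set $\{s,s+2,\dots,s+4q\}$ of $C_{4q+3}$, then the next heavy column must carry some $A(s')$ with $B(s)\cup A(s')$ independent \emph{and} disjoint, and the admissible shifts $s'-s$ form a restricted set; one then has to show that some sequence of admissible shifts of total length $2t+1$ closes up modulo $4q+3$ while also keeping the two light patterns at the double gap disjoint with independent union. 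This is a nontrivial existence statement depending on both $r$ and $t$, and it is exactly the content of Hales's construction; without it you have proved the theorem only for even $r$ (plus $r=1$), which in particular misses the case $C_5\boxtimes C_5$. To complete the proof you need either to carry out this rotation argument uniformly or to give a different explicit independent set of size $rt+\left\lfloor\tfrac{r}{2}\right\rfloor$ (for instance, by adapting a known non-attacking-kings configuration on the $(2r+1)\times(2t+1)$ torus).
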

By using the above result we obtain the following.

\begin{theorem}\label{th Codd Cood bounds}
For $1\le r\le t$,
$$3rt + 2r + 2t +1 - \left\lfloor \frac{r}{2} \right\rfloor \le dim_s(C_{2r+1}\boxtimes C_{2t+1})\le 3rt + 2r + 2t +1.$$
\end{theorem}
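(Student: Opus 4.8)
The plan is to feed the exact value from Theorem \ref{th Codd Codd indep} into the identity (\ref{oellermann-gallai}). Since $C_{2r+1}$ and $C_{2t+1}$ have orders $2r+1$ and $2t+1$, equation (\ref{oellermann-gallai}) reads
$$dim_s(C_{2r+1}\boxtimes C_{2t+1}) = (2r+1)(2t+1) - \beta\big((C_{2r+1}\boxtimes C_{2t+1})_{SR}\big) = 4rt+2r+2t+1 - \beta\big((C_{2r+1}\boxtimes C_{2t+1})_{SR}\big),$$
so it is enough to sandwich $\beta\big((C_{2r+1}\boxtimes C_{2t+1})_{SR}\big)$ between $rt$ and $rt+\lfloor r/2\rfloor$.

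First I would observe that $(C_{2m+1})_{SR}\cong C_{2m+1}$ for every $m\ge 1$. Indeed, two vertices of an odd cycle are mutually maximally distant exactly when they are at distance equal to the diameter $m$: if $d(u,x)=k<m$, then one of the two neighbours of $u$ is at distance $k+1$ from $x$, so $u$ is not maximally distant from $x$; whereas if $d(u,x)=m$, both neighbours of $u$ are still within distance $m$ of $x$. Thus in $(C_{2m+1})_{SR}$ each vertex is joined precisely to the two vertices at cyclic distance $m$ from it, and since $\gcd(m,2m+1)=1$ this is a connected $2$-regular graph, hence a cycle on $2m+1$ vertices. In particular $(C_{2r+1})_{SR}\cong C_{2r+1}$, $(C_{2t+1})_{SR}\cong C_{2t+1}$, and $\beta(C_{2m+1})=m$. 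This identification is really the only step specific to cycles; one could also simply cite it, as it is standard.

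Next, I would apply Corollary \ref{cor beta products} (which packages Theorem \ref{th products subgraphs}) to $G=C_{2r+1}$ and $H=C_{2t+1}$. Using the previous paragraph to replace $(C_{2r+1})_{SR}$ by $C_{2r+1}$ and $(C_{2t+1})_{SR}$ by $C_{2t+1}$, Corollary \ref{cor beta products} becomes
$$\beta(C_{2r+1}\boxtimes C_{2t+1}) \ge \beta\big((C_{2r+1}\boxtimes C_{2t+1})_{SR}\big) \ge \beta(C_{2r+1}\oplus C_{2t+1}).$$
By Theorem \ref{th Codd Codd indep}, $\beta(C_{2r+1}\boxtimes C_{2t+1}) = rt+\lfloor r/2\rfloor$, and by Lemma \ref{lem Cartesian sum} together with $\beta(C_{2m+1})=m$, $\beta(C_{2r+1}\oplus C_{2t+1}) = \beta(C_{2r+1})\cdot\beta(C_{2t+1}) = rt$. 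Substituting the resulting inequalities $rt \le \beta\big((C_{2r+1}\boxtimes C_{2t+1})_{SR}\big) \le rt+\lfloor r/2\rfloor$ into the displayed identity for $dim_s$ yields
$$3rt+2r+2t+1-\Big\lfloor\frac{r}{2}\Big\rfloor \le dim_s(C_{2r+1}\boxtimes C_{2t+1}) \le 3rt+2r+2t+1,$$
as desired.

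Finally, I would note that the upper bound is also immediate from Theorem \ref{th Cood H bounds} with $H=C_{2t+1}$ and $dim_s(C_{2t+1})=t+1$, so the genuinely new content is the lower bound, which is exactly where Theorem \ref{th Codd Codd indep} enters. I do not anticipate a serious obstacle: beyond the routine identification of $(C_{2m+1})_{SR}$ with $C_{2m+1}$, the argument is a direct combination of the monotonicity of $\beta$ under the subgraph relation (Corollary \ref{cor beta products}), the known value of $\beta(C_{2r+1}\boxtimes C_{2t+1})$, and the formula $\beta(C_{2r+1}\oplus C_{2t+1})=rt$ coming from Lemma \ref{lem Cartesian sum}.
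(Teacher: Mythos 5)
Your proof is correct and follows essentially the same route as the paper: both obtain the lower bound from $\beta\big((C_{2r+1}\boxtimes C_{2t+1})_{SR}\big)\le\beta\big((C_{2r+1})_{SR}\boxtimes (C_{2t+1})_{SR}\big)=\beta(C_{2r+1}\boxtimes C_{2t+1})=rt+\lfloor r/2\rfloor$ via Theorem \ref{th Codd Codd indep}, and the upper bound from the $\oplus$ side of Corollary \ref{cor beta products} (equivalently, Theorem \ref{th Cood H bounds}). Your only addition is to spell out the identification $(C_{2m+1})_{SR}\cong C_{2m+1}$, which the paper uses implicitly; that step is correct.
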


\begin{proof}
By using Theorem \ref{th products subgraphs} we have that $G_{SR}\boxtimes H_{SR}\sqsubseteq (G\boxtimes H)_{SR}$. Thus, $\beta(G_{SR}\boxtimes H_{SR})\ge \beta((G\boxtimes H)_{SR})$. Hence, from equality (\ref{oellermann-gallai}) and Theorem \ref{th Codd Codd indep} we have
\begin{align*}dim_s(C_{2r+1}\boxtimes C_{2t+1})& = (2r+1)\cdot (2t+1) - \beta((C_{2r+1}\boxtimes C_{2t+1})_{SR})\\
&\ge (2r+1)\cdot (2t+1) - \beta((C_{2r+1})_{SR}\boxtimes (C_{2t+1})_{SR})\\
& = (2r+1)\cdot (2t+1) - \beta(C_{2r+1}\boxtimes C_{2t+1})\\
& = (2r+1)\cdot (2t+1) - r\cdot t - \left\lfloor \frac{r}{2} \right\rfloor\\
& = 3rt + 2r + 2t +1 - \left\lfloor \frac{r}{2} \right\rfloor.
\end{align*}
The upper bound is direct consequence of Theorem \ref{th C1graphs bounds}.
\end{proof}

Notice that for $r = 1$ the lower bound is equal to the upper bound in the above theorem. Thus, $dim_s(C_{3}\boxtimes C_{2t+1}) = 5t + 3$ for every $t\ge 1$.

\end{document}